\newtheorem{theorem}{Theorem}[section]
\newtheorem{lemma}[theorem]{Lemma}
\newtheorem{corollary}[theorem]{Corollary}
\newtheorem{proposition}[theorem]{Proposition}
\theoremstyle{definition}
\newtheorem{example}[theorem]{Example}
\theoremstyle{remark}
\newtheorem{remark}[theorem]{Remark}
\numberwithin{equation}{section}
\newcommand{\beq}{\begin{eqnarray}}
\newcommand{\eeq}{\end{eqnarray}}
\newcommand{\beqe}{\begin{eqnarray*}}
	\newcommand{\eeqe}{\end{eqnarray*}}
\DeclareMathOperator{\density}{volfrac}
\DeclareMathOperator{\codim}{codim}
\DeclareMathOperator{\vol}{vol}
\DeclareMathOperator{\spec}{spec}
\DeclareMathOperator{\dist}{dist}
\DeclareMathOperator{\vect}{Vect}
\DeclareMathOperator{\sym}{Sym}
\DeclareMathOperator{\ricci}{Ric}
\DeclareMathOperator{\scal}{Scal}
\DeclareMathOperator{\hbc}{HBC}
\DeclareMathOperator{\hc}{HSC}
\DeclareMathOperator{\tr}{Tr}
\DeclareMathOperator{\mat}{Mat}
\newcommand{\cov}{\ensuremath \mathrm{Cov}}
\newcommand{\var}{\ensuremath \mathrm{Var}}
\newcommand{\id}{\ensuremath \mathrm{Id}}
\newcommand{\End}{\ensuremath \mathrm{End}}
\newcommand{\Id}{\ensuremath \mathrm{Id}}
\newcommand{\Nn}{\mathbb{N}}
\newcommand{\R}{\mathbb{R}}
\newcommand{\C}{\mathbb{C}}
\let\epsilon=\varepsilon
\begin{document}

\title[On the curvatures of random complex submanifolds]{On the curvatures of \\ random complex submanifolds}
\author{Michele Ancona }
\address{Laboratoire J.A. Dieudonn\'e,
	UMR CNRS 7351,
	Universit\'e C\^ote d'Azur, Parc Valrose,
	06108 Nice, Cedex 2, France}
\email{michele.ancona@unice.fr}

\author{Damien Gayet}
\address{ Univ. Grenoble Alpes, Institut Fourier,
	F-38000 Grenoble, France,
	CNRS UMR 5208,
	CNRS, IF, F-38000 Grenoble, France}
\email{damien.gayet@univ-grenoble-alpes.fr}
\thanks{The research leading to these results has received funding from the French Agence nationale de la recherche  ANR-20-CE40-0017 (Adyct).}

\begin{abstract} 
	For any integers $n\geq 2$ and $1\leq r\leq n-1$ satisfying $3r\geq 2n-1$, we show that the expected volume fraction 
	of a random degree $d$ complex submanifold of $\C\mathbb{P}^n$ of codimension $r$  where the bisectional holomorphic curvature (for the induced ambient metric) is negative tends to one when $d$ goes to infinity. Here, the probability measure is the natural one associated with the Fubini--Study metric. 
	We provide similar estimates for the holomorphic sectional curvature, the Ricci curvature, and the scalar curvature. Our results hold more generally for random submanifolds within any complex projective manifold. 

\end{abstract}

\maketitle

\section{Introduction}
Let $M$ be a K\"ahler manifold of dimension $n$ and denote by $J$ the complex structure of $M$. Let $g$ be the K\"ahler metric on $M$ and $R:=R^M$ be the associated Riemannian tensor curvature. Recall that for any $x\in M$ and two non-zeros tangent vectors $X,Y\in T_xM$, the {\it holomorphic bisectional curvature} at $x$ in the directions $(X,Y)$ equals~\cite[Note 23]{kobayashi}:
$$\hbc_{(M,g)}(X,Y) = \frac{R(X, JX, Y , JY )}{\|X\|^2_g\|Y\|^2_g}.$$
We will sometimes write $\hbc_{M}(X,Y)$ when the metric we choose is clear. when the metric we choose is clear. The holomorphic bisectional curvature depends only on the two complex lines defined by $X$ and $Y$.
 Note also that the holomorphic bisectional curvature can be recovered from the Riemannian sectional curvature using the formula
 $$ R(X,JX, Y, JY) = R(X, Y, X, Y) + R(X, JY, X, JY).$$
  Having a \emph{positive} holomorphic bisectional curvature is a very strong constraint, see~\cite{goldberg1967holomorphic}. In particular, any  such compact K\"ahler manifold is biholomorphic to $\C\mathbb{P}^n$~\cite{siu1980compact}. On the other hand, any complex submanifold in $\C^n$, or in a complex flat torus, has  non-positive bisectional curvature, see Proposition~\ref{proposition2} below. 
  For $x\in M$ and $X\in T_xM$, the \emph{holomorphic sectional curvature} of $g$ at $x$ is defined by  
  $$\hc_M(X)= \hbc_M(X,X),$$ and the \emph{Ricci curvature} by  $$ \ricci_M(X)= \sum_{i=1}^n \hbc_M(X,e_i),$$
  where $\{e_i,Je_i\}_{i=1,\cdots, n}$ is any orthonormal basis of $(T_xM,g)$. Finally, the \emph{scalar curvature} equals
  $$ \scal_M= \sum_{i,j=1}^n \hbc_M(e_i,e_j).$$
For complex curves, all these curvatures coincide. In general, the holomorphic bisectional curvature determines the holomorphic sectional curvature,  the Ricci curvature, and the scalar curvature.    Note also that the Ricci and scalar curvatures can be defined on a general Riemannian manifold, not necessarily K\"ahler.  
   
\subsection{Probabilistic setting} The goal of this paper is to study these four curvatures for \emph{random} complex submanifolds $Z$ of a complex projective manifold, equipped with the restriction $g_{|Z}$ of a fixed ambient K\"ahler metric $g$. We now introduce our setting, which has become quite standard since~\cite{SZ99}. 
Let us equip $M$ with a Hermitian ample holomorphic line bundle  $(L,h)\to M$  with positive curvature $\omega$, 
that is, locally 
\begin{equation}\label{omega}
 \omega = \frac{1}{2i\pi}\partial \bar{\partial}\log \|s\|^2_h>0,
 \end{equation}
where $s$ is any local non vanishing holomorphic section of $L$. Let $g= \omega (\cdot, J\cdot)$ be the associated K\"ahler metric, that is fixed from now on. Consider a rank $r$ holomorphic Hermitian bundle $(E,h_E)$ over $M$.
The space $H^0(M,E\otimes L^{ d})$ of holomorphic sections of $E\otimes L^{d}:=E\otimes L^{\otimes d}$ is non trivial for $d$ large enough, 
and can be equipped with the $\mathscr{L}^2$ Hermitian product 
\begin{equation}\label{prod}
(s,t)\in (H^0(M,E\otimes L^d))^2\mapsto \langle s,t\rangle = \int_M \langle s(x),t(x)\rangle_{h_d}\frac{\omega^n}{n!},
\end{equation}
where 
$h_d:= h^E\otimes h^d.$
This product induces a Gaussian measure $\mu_d$ over $H^0(M,E\otimes L^d)$, 
defined for any Borelian $U\subset H^0(M,E\otimes L^d)$ by
\begin{equation}\label{mesure}
\mu_d (U)=\int_{s\in U} e^{-\frac12 \|s\|^2} \frac{\mathrm{d} s}{(2\pi)^{N_d}},
\end{equation}
where $N_d $ denotes the complex dimension of $H^0(M,E\otimes L^d)$, and $\mathrm{d} s$ denotes the Lebesgue measure associated with the Hermitian product~(\ref{prod}).
\begin{remark} Given the previous $\mathscr{L}^2$ Hermitian product,  an equivalent way to define a random section $s$ with respect to the previous metric $\mu_d$ is the following:
if $(S_i)_{i\in \{1, \dots, N_d\}}$
denotes a unitary basis of this space, then 
$$ s =\sum_{i=1}^{N_d} a_i S_i$$
follows the law $\mu_d$ if the random variables $\sqrt 2 a_i$ are i.i.d standard complex Gaussians, 
that is $\Re a_i$ and $\Im a_i$ are independent, centered Gaussian variables with variance equal to $1/2$.
Note  also that for any event depending only on the vanishing locus $Z(s)$ of $s\in H^0(M,E\otimes L^d)$, the probability measure $\mu_d$ can be replaced by the invariant measure over the unit sphere $$ \mathbb S H^0(M,E\otimes L^d)$$ for the product~(\ref{prod}), or equivalently the Fubini--Study measure on the linear system $\mathbb P H^0(M,E\otimes L^d)$.
\end{remark}

\begin{example}[Random polynomials]\label{example} When $M$ is the projective space $\C\mathbb{P}^n$, the line bundle $(L, h)$ is the degree
	$1$ holomorphic line bundle $(\mathcal O(1),h_{\mathrm{FS}})$  equipped with the standard Fubini--Study metric and $(E,h^E) = (\C\mathbb{P}^n\times \C^r,h_0)$ is the trivial rank $r$ bundle equipped with the standard Hermitian product, then the space  of global section $H^0(M,E\otimes L^d)$ is naturally identified with to the space of $r$-uples of  degree $d$ homogeneous
	polynomials in $n + 1$ variables
	$ (\C_d^{hom}[Z_0 , \cdots , Z_n ])^r$. In this case, if $(e_i)_{i\in \{1, \cdots, r\}}$ denotes the
	standard basis of $\C^r$, the family 
	$$\left(\sqrt{ \frac{(n+d)!}{n!i_0!\cdots i_n!})}
	Z_0^{i_0}\cdots Z_n^{i_n}\otimes e_i\right)_{i_0+\cdots +i_n =d, \ i\in \{1, \cdots, r\}}$$ 
	is a unitary basis for the Hermitian metric~(\ref{prod}).
\end{example}
\subsection{Statement of the main results} Our main result estimates the proportion of a complex submanifold $Z$ where its bisectional, holomorphic sectional, Ricci, and scalar curvature of $Z$ has negativity properties. To do this, 
for any $a\in \R$ and any complex submanifold $Z$ of $M$,  we define
\begin{eqnarray*}
\vol(\hbc_Z<a)&:=&\vol \left\lbrace x\in Z, \ \sup_{X,Y\in  T_xZ} \hbc_{Z,g_{|Z}} (X,Y)<a\right\rbrace
\end{eqnarray*}
the volume of the part of $Z$ where the holomorphic bisectional curvature is smaller than $a$ and by
\begin{equation}\label{kappa0}
\density(\hbc_Z<a):= \frac{\vol(\hbc_Z<a)}{\vol Z}
\end{equation}
the volume fraction of $M$ where the holomorphic bisectional curvature is smaller than $a$. 
We emphasize that the curvature of $Z$ is computed for the restriction $g_{|Z}$ to $Z$.
Similarly, we define $\density(\hc_Z<a)$, 
$\density(\ricci_Z<a)$ and $\density(\scal_Z<a)$.
The main theorem of the paper computes the average of $\density(\hbc_{Z(s)}<a)$, where $Z(s)$ is the zero locus of $s\in H^0(M,E\otimes L^d)$. 
\begin{theorem}\label{theorem1}
	Let $n\geq 2$ and $1\leq r\le n-1$ be integers, $M$ be a complex projective manifold of dimension $n$, $(E,h^E)\to M$ be a rank $r$ holomorphic vector bundle equipped with a Hermitian metric, $(L,h)\to M$ be an ample holomorphic line bundle equipped with a Hermitian metric with positive curvature $\omega$, and $g$ be the associated K\"ahler metric. 
	Then there exists $C>0$ such that for any sequence $(a_d)_{d\in \Nn}$ of positive reals the following holds:
	\begin{enumerate}
		\item (holomorphic bisectional curvature) if 
		$3r\geq 2n-1$, 
	$$\forall d\gg1, \ 
	\mathbb E_{\mu_d}	[
	\density(\hbc_{Z(s)}<-a_d)
]
\geq 1-  C\left(\frac{a_d+C}{d}\right)^{3r-2n+2}.$$
	\item (holomorphic sectional curvature) If $2r\geq n$,
			$$
			\forall d\gg1, \ 
			\mathbb E 
		[
		\density(\hc_{Z(s)} <-a_d)
		]\geq 
		1- C\left(\frac{a_d+C}{d}\right)^{2r-n+1}.$$
		\item (Ricci curvature) $\forall d\gg1, $  $\displaystyle\mathbb E 
		[
		\density(\ricci_{Z(s)} <-a_d)
		]\geq 1-C\left(\frac{a_d+C}{d}\right)^{r(n-r)-(n-r-1)}.$
\item (Scalar curvature) 
$\displaystyle \forall d\gg1, \ 
	\mathbb E 
[
\density(\scal_{Z(s)} <-a_d)]\geq 1-
C \left(\frac{a_d+C}{d}\right)^{\frac{1}2r(n-r)(n-r+1)}.$
	\end{enumerate}
\end{theorem}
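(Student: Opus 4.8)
The plan is to convert each of the four curvature conditions into a quantitative degeneracy statement for the second fundamental form of $Z(s)$, and then to bound the resulting Gaussian probabilities using the second-order near-diagonal asymptotics of the Bergman kernel of $E\otimes L^d$. First I would apply the Gauss equation to the K\"ahler submanifold $Z=Z(s)\subset M$. Writing $\sigma$ for the symmetric second fundamental form, an element of $\mathrm{Hom}(\sym^2 T^{1,0}Z,\,N^{1,0}Z)$, one has
$$\hbc_{Z}(X,Y)=\hbc_M(X,Y)-\frac{\|\sigma(X,Y)\|^2}{\|X\|_g^2\,\|Y\|_g^2},$$
and contracting this identity produces the companion formulas for $\hc_Z$, $\ricci_Z$ and $\scal_Z$ in terms of $\sigma$. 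Since $M$ is compact, $|\hbc_M|\le C_0$, so on $Z$ the negativity $\hbc_Z<-a_d$ is implied by $\|\sigma(X,Y)\|^2>(C_0+a_d)\,\|X\|^2\|Y\|^2$ for all $X,Y$, and similarly for the other three curvatures. Hence the bad set --- where the desired negativity fails --- is contained in the locus where $\sigma$ is almost degenerate: for $\hbc$ some value $\sigma(X,Y)$ is small; for $\hc$ some $\sigma(X,X)$; for $\ricci$ some contraction $\sigma(X,\cdot)\in\mathrm{Hom}(T^{1,0}Z,N^{1,0}Z)$ has small Hilbert--Schmidt norm; and for $\scal$ the full norm $\|\sigma\|_{\mathrm{HS}}$ is small.

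Next I would exploit that $\vol Z(s)=\tfrac1{(n-r)!}\int_M c_r(E\otimes L^d)\wedge\omega^{n-r}$ is topological, hence the same deterministic number for every smooth $Z(s)$; thus $\E[\density(\hbc_Z\ge -a_d)]=\E[\vol(\mathrm{bad})]/\vol Z$, with no concentration argument needed for the denominator. The numerator I would compute by the Kac--Rice/coarea formula, writing $\E[\vol(\mathrm{bad})]$ as $\int_M \E[J\,\mathbf 1_{\mathrm{bad}}\mid s(x)=0]\,\rho_d(x)\,dV_M(x)$, where $J$ is the coarea Jacobian and $\rho_d$ the Gaussian density of $s(x)$ at $0$. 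After the Shiffman--Zelditch rescaling by $\sqrt d$ about $x$, the $2$-jet of $s$ converges to the universal Bargmann--Fock field; conditioning on $s(x)=0$ identifies $T_xZ=\ker\nabla s(x)$ and expresses $\sigma$ through the normal part of the Hessian $\nabla^2 s(x)$. Because $L^d$ carries curvature $d\omega$, the rescaled form $\tilde\sigma=\sigma/\sqrt d$ is a centred Gaussian in $\mathrm{Hom}(\sym^2 V,W)$, $V=\C^{n-r}$, $W=\C^r$, and the threshold $(C_0+a_d)\|X\|^2\|Y\|^2$ becomes $\epsilon\,d\,\|X\|^2\|Y\|^2$ with $\epsilon=(C_0+a_d)/d$. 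In these variables the bad event is $\{\dist(\tilde\sigma,\mathrm{Bad})\lesssim\sqrt\epsilon\}$, where $\mathrm{Bad}\subset\mathrm{Hom}(\sym^2 V,W)$ is the algebraic locus of genuinely degenerate forms.

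The heart of the argument is then a codimension count combined with a tube estimate. For each curvature $\mathrm{Bad}$ is the projection of an explicit incidence variety; for $\hbc$ this is $\{(\sigma,[X],[Y]):\sigma(X,Y)=0\}\subset\mathrm{Hom}(\sym^2 V,W)\times\PR(V)\times\PR(V)$, whose fibre over each $([X],[Y])$ has codimension $r$, so that $\codim_\C\mathrm{Bad}\ge r-2(n-r-1)=3r-2n+2$. The same computation gives $2r-n+1$ for $\hc$ (incidence over $\PR(V)$, fibre codimension $r$), $r(n-r)-(n-r-1)$ for $\ricci$ (fibre $\{\sigma(X,\cdot)=0\}$ of codimension $r(n-r)$), and the full dimension $\tfrac12 r(n-r)(n-r+1)$ for $\scal$ (where $\mathrm{Bad}=\{0\}$). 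Since each $\mathrm{Bad}$ is algebraic of bounded degree and the law of $\tilde\sigma$ has a bounded density, the Gaussian mass of a $\sqrt\epsilon$-neighbourhood of a codimension-$c$ locus is $\lesssim\epsilon^{c}$; feeding this into the Kac--Rice ratio yields $\E[\density(\mathrm{bad})]\lesssim((a_d+C)/d)^{c}$, which is precisely the four stated exponents. The hypotheses $3r\ge 2n-1$ and $2r\ge n$ are exactly what make the first two codimensions positive.

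The main obstacle is the uniform-in-$d$ probabilistic input. One must extract from the near-diagonal second-order expansion of the Bergman kernel that the law of the rescaled Hessian, conditioned on $s(x)=0$, converges to a nondegenerate Gaussian on $\mathrm{Hom}(\sym^2 V,W)$ whose covariance stays uniformly elliptic and whose density stays bounded as $d\to\infty$, uniformly in $x\in M$; only then is the small-ball/tube estimate valid with a constant independent of $d$. Coupled to this is the geometric task of making the tube bound uniform over the projective parameters $(X,Y)$ and over the possibly singular strata of $\mathrm{Bad}$, and of controlling the coarea weight $J$ (which depends on $\nabla s(x)$) against the indicator $\mathbf 1_{\mathrm{bad}}$ (which depends on $\nabla^2 s(x)$), so that the final constant $C$ is independent of $d$ and of the sequence $(a_d)$.
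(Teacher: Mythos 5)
Your proposal matches the paper's proof essentially step for step: the Gauss-equation reduction of the four curvatures to near-degeneracy of the second fundamental form, the deterministic volume of $Z(s)$ via Wirtinger and Chern classes (Proposition~\ref{wirtinger}), Kac--Rice conditioning plus $\sqrt d$-rescaling to the Bargmann--Fock model (Proposition~\ref{pasteque}), the incidence-variety codimension counts giving exactly the exponents $3r-2n+2$, $2r-n+1$, $r(n-r)-(n-r-1)$ and $\tfrac12 r(n-r)(n-r+1)$ (Lemma~\ref{codim}), and the tube estimate for the Gaussian mass of a small neighbourhood of the algebraic degeneracy locus (Proposition~\ref{burg}). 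The one imprecision --- $\sigma/\sqrt d$ is not itself Gaussian, since $\sigma$ involves $(\nabla s\, G\, \nabla s^*)^{-1}$ --- is precisely the coupling issue you flag in your final paragraph, and the paper resolves it by keeping the Gaussian Hessian $T$, bounding $(SS^*)^{-1}(v)(v)\ge \|S\|^{-2}\|v\|^2$ (Lemma~\ref{lemmaT}), and integrating the resulting $\|S\|$-dependent threshold against the $S$-marginal.
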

The condition on the codimension that appears in the case of holomorphic sectional and bisectional curvature in the theorem is optimal. In fact, if these relations are not satisfied, then, for every point of the submanifold, there will always be directions in which the holomorphic sectional and bisectional curvature are equal to that of the ambient manifold, which can be positive, see section~\ref{sysy}.

Remark that for any $d\in \Nn^*$, 
$$ \inf_{P\in \C^{hom}_d[Z_0,\cdots, Z_n]} \density(\scal_{Z(P)}<0)= 0,$$
and  similarly for the other curvatures. This can be seen by the smoothing of $d$ generic hyperplanes: by making increasingly smaller perturbations of this singular hypersurface, one can obtain smooth hypersurfaces with a volume fraction arbitrarily close to $1$ of the region the curvature is positive. An analogous construction can be made in higher codimension. Moreover, note that \emph{any} complex curve in $\C\mathbb{P}^2$ has points with positive curvature. These are the inflexion points, and generically there are $3d(d-2)$ of them. Similarily, let $T=\C^n/\Lambda$ be a complex torus equipped with the standard flat metric. Then, there is no hypersurface with negative holomorphic sectional curvature, hence with negative holomorphic bisectional curvature. Indeed, the covering of the hypersurface would be a complete hypersurface in $\C^n$ with negative sectional curvature, which is not possible~\cite{yang1977curvatures}. 

For $n=2$ and $r=1$,  then $Z(s)$ is a complex curve and the holomorphic bisectional curvature is the Gauss curvature. In this case, a weaker version Theorem~\ref{theorem1} has been proved in~\cite{AG2}.

\begin{corollary}[Concentration in probability]\label{coro2}
	Under the hypotheses of Theorem~\ref{theorem1},
for any $\epsilon \in (0,1]$,
	\begin{enumerate}
		\item (holomorphic bisectional curvature) if 
		$3r\geq 2n-1$ and  $0<\eta < \epsilon (3r-2n+2),$
		$$
\mu_d	\left[
		\density(\hbc_{Z(s)}<-d^{1-\epsilon})>1-d^{-\eta}
		\right]\to_{d\to \infty} 1.$$
		\item (holomorphic sectional curvature) If $2r\geq n$ and $0<\eta < \epsilon(2r-n+1)$,
		$$
\mu_d	\left[
\density(\hc_{Z(s)}<-d^{1-\epsilon})>1-d^{-\eta}
\right]\to_{d\to \infty} 1.$$

		\item (Ricci curvature) If  $0<\eta < \epsilon(r(n-r)-(n-r-1))$
$$		\mu_d	\left[
		\density(\ricci_{Z(s)}<-d^{1-\epsilon})>1-d^{-\eta}
		\right]\to_{d\to \infty} 1.$$
		\item (Scalar curvature) 
		If  $0<\eta < \epsilon(\frac{1}2r(n-r)(n-r+1))$
		$$		\mu_d	\left[
		\density(\scal_{Z(s)}<-d^{1-\epsilon})>1-d^{-\eta}
		\right]\to_{d\to \infty} 1.$$
	\end{enumerate}
\end{corollary}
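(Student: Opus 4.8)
The four statements all follow from Theorem~\ref{theorem1} by one and the same Markov-type argument, so the plan is to treat them uniformly and then specialize the exponent. For each of the four curvatures, let $\beta$ denote the corresponding exponent appearing in Theorem~\ref{theorem1} (thus $\beta = 3r-2n+2$ in the bisectional case, $\beta = 2r-n+1$ in the holomorphic sectional case, $\beta = r(n-r)-(n-r-1)$ for the Ricci curvature, and $\beta = \tfrac12 r(n-r)(n-r+1)$ for the scalar curvature), and write $X_d := \density(\,\cdot\, <-d^{1-\epsilon})$ for the associated volume fraction, a random variable with values in $[0,1]$.

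First I would feed the deterministic sequence $a_d := d^{1-\epsilon}$ into Theorem~\ref{theorem1}. Since $\epsilon\in(0,1]$, for $d$ large enough one has $d^{1-\epsilon}+C\leq 2\,d^{1-\epsilon}$, hence $(a_d+C)/d\leq 2\,d^{-\epsilon}$, and therefore
$$\mathbb E_{\mu_d}[X_d]\ \geq\ 1-C'\,d^{-\epsilon\beta}$$
for the constant $C':=2^{\beta}C$ and all $d\gg 1$, in each of the four cases.

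Next I would apply Markov's inequality to the nonnegative random variable $Y_d:=1-X_d$, which by the previous display satisfies $\mathbb E_{\mu_d}[Y_d]\leq C'\,d^{-\epsilon\beta}$. Taking the threshold $d^{-\eta}$ gives
$$\mu_d\bigl[Y_d\geq d^{-\eta}\bigr]\ \leq\ \frac{\mathbb E_{\mu_d}[Y_d]}{d^{-\eta}}\ \leq\ C'\,d^{\,\eta-\epsilon\beta}.$$
By hypothesis $0<\eta<\epsilon\beta$, so $\eta-\epsilon\beta<0$ and the right-hand side tends to $0$ as $d\to\infty$. Passing to the complementary event yields
$$\mu_d\bigl[X_d>1-d^{-\eta}\bigr]\ =\ \mu_d\bigl[Y_d<d^{-\eta}\bigr]\ \to_{d\to\infty}\ 1,$$
which is exactly the claimed concentration for the curvature under consideration.

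This deduction is entirely routine, and I do not expect any genuine obstacle once Theorem~\ref{theorem1} is available: the content is a single application of Markov's inequality to the complementary density $1-X_d$. The only points needing a little care are the asymptotic simplification of $(a_d+C)/d$ and the bookkeeping of the four exponents, but the strict inequality $\eta<\epsilon\beta$ is precisely what guarantees that the Markov bound decays to zero in each of the four cases.
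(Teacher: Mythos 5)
Your proof is correct and is essentially the paper's own argument: the paper bounds $\mathbb E_{\mu_d}[\density]\le \alpha\,\mu_d[\density<\alpha]+\mu_d[\density\ge\alpha]$ with $a_d=d^{1-\epsilon}$ and $\alpha=1-d^{-\eta}$, which is exactly your application of Markov's inequality to $Y_d=1-X_d$ at threshold $d^{-\eta}$, and the strict inequality $\eta<\epsilon\beta$ plays the same role in both. One trivial nitpick: your simplification $d^{1-\epsilon}+C\le 2\,d^{1-\epsilon}$ can fail when $\epsilon=1$ (where $d^{1-\epsilon}=1$) unless $C\le 1$; since $d^{1-\epsilon}\ge 1$ for $\epsilon\in(0,1]$, replacing the factor $2$ by $1+C$ repairs this without changing anything else.
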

Note that, in a slightly different direction, the authors proved in in~\cite[Theorem 1.6]{AG} that $$\mu_d
\left[s\in H^0(E\otimes L^d), \min_{x\in Z(s)}K_{min}(x) > -d^{3(n+1)}
\right]\to 1,$$
where $K_{min}(x)$ is the infimum of the sectional curvature of $Z(s)$ at $x$.
This implies the same for the holomorphic bisectional curvature, since the latter can be expressed as the sum of two sectional curvatures. 

Before stating the next corollary,
let us consider the probability space $(\mathcal H, \mu)$,
where 
$$ \mathcal H := \prod_{d=1}^\infty H^0(M,E\otimes L^d)$$
and $\mu$ is the product of the measures $\mu_d$. 
\begin{corollary}[Almost-sure convergence]\label{almost} Under the hypotheses of Theorem~\ref{theorem1}, 
\begin{enumerate}
		\item (holomorphic bisectional curvature) if 
	$3r-2n+2>1$, then, for almost all sequence $(s_d)_{d\geq 1}$ of $\mathcal H$, 
	$\density (\hbc_{Z(s_d)}<-1)\to_{d\to \infty} 1.$
	\item (holomorphic sectional curvature) If $2r-n+1>1$, 
	then, for almost all sequence $(s_d)_{d\geq 1}$ of $\mathcal H$, 
	$\density (\hc_{Z(s_d)}<-1)\to_{d\to \infty} 1.$
	\item (Ricci curvature) If $r(n-r)-(n-r-1)>1,$
	then, for almost all sequence $(s_d)_{d\geq 1}$ of $\mathcal H$, 
	$\density (\ricci_{Z(s_d)}<-1)\to_{d\to \infty} 1.$
	\item (scalar curvature) If $\frac{1}2r(n-r)(n-r+1)>1$, then, for almost all sequence $(s_d)_{d\geq 1}$ of $\mathcal H$, 
	$\density (\scal_{Z(s_d)}<-1)\to_{d\to \infty} 1.$
\end{enumerate}
\end{corollary}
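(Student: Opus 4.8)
The plan is to deduce the four almost-sure statements from the expectation estimates of Theorem~\ref{theorem1} by a routine Borel--Cantelli argument; the point is that the arithmetic hypothesis imposed in each case is exactly the summability condition that this argument requires. I would treat the four curvatures simultaneously: let $m$ denote the relevant exponent, namely $m=3r-2n+2$, $m=2r-n+1$, $m=r(n-r)-(n-r-1)$, and $m=\frac12 r(n-r)(n-r+1)$ in the bisectional, holomorphic sectional, Ricci, and scalar cases respectively, and let $v_d$ denote the corresponding volume fraction, that is $\density(\hbc_{Z(s_d)}<-1)$, $\density(\hc_{Z(s_d)}<-1)$, $\density(\ricci_{Z(s_d)}<-1)$, or $\density(\scal_{Z(s_d)}<-1)$.

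First I would specialize Theorem~\ref{theorem1} to the constant sequence $a_d\equiv 1$. This provides a constant $C>0$ such that, for all $d$ large enough,
\begin{equation*}
\mathbb E_{\mu_d}\bigl[\,1-v_d\,\bigr]\;\le\; C\Bigl(\frac{1+C}{d}\Bigr)^{m}\;=\;\frac{C'}{d^{m}},\qquad C':=C(1+C)^{m}.
\end{equation*}
Since $v_d\in[0,1]$, the nonnegative random variable $X_d:=1-v_d$ on the probability space $(\mathcal H,\mu)$ satisfies $\mathbb E_\mu[X_d]=\mathbb E_{\mu_d}[X_d]\le C'/d^{m}$ for $d\gg 1$, the first equality holding because $X_d$ depends only on the $d$-th coordinate, whose marginal law is $\mu_d$.

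Next I would apply Markov's inequality: for every $\epsilon>0$ and all large $d$,
\begin{equation*}
\mu\bigl[X_d>\epsilon\bigr]\;\le\;\frac{\mathbb E_\mu[X_d]}{\epsilon}\;\le\;\frac{C'}{\epsilon\, d^{m}}.
\end{equation*}
In each item the hypothesis reads exactly $m>1$, so $\sum_{d\ge 1} d^{-m}<\infty$ and hence $\sum_{d}\mu[X_d>\epsilon]<\infty$. By the first Borel--Cantelli lemma (which requires no independence), $\mu[\,X_d>\epsilon\text{ for infinitely many }d\,]=0$; equivalently, for $\mu$-almost every sequence $(s_d)_{d\ge1}$ one has $X_d\le\epsilon$ for all large $d$. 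Intersecting the resulting full-measure events over the countable family $\epsilon=1/k$, $k\ge1$, produces a single full-measure set on which $X_d\to0$, i.e.\ $v_d\to1$, which is the claim.

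I expect no serious obstacle here: the analytic content is entirely packaged in Theorem~\ref{theorem1}, and the passage to almost-sure convergence is the standard Markov--Borel--Cantelli mechanism. The only features worth tracking are that the decay $d^{-m}$ furnished by the theorem is summable precisely when $m>1$ --- which is why the four hypotheses take their stated form --- and that the boundedness $X_d\in[0,1]$ makes Markov's inequality applicable without any integrability discussion.
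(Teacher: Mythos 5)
Your proof is correct and is essentially the paper's own argument: both rest on applying Theorem~\ref{theorem1} with $a_d\equiv 1$ to get $\mathbb E_\mu[1-v_d]=O(d^{-m})$ with $m>1$ summable. The paper concludes slightly more directly via Tonelli ($\mathbb E_\mu\bigl[\sum_d (1-v_d)\bigr]<\infty$, hence $\sum_d(1-v_d)<\infty$ almost surely, hence $1-v_d\to 0$), whereas you repackage the same summability through Markov plus Borel--Cantelli and a countable intersection over $\epsilon=1/k$ --- a cosmetic difference only.
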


Finally, let us state a deterministic consequence of Theorem~\ref{theorem1}:
\begin{corollary}[Existence]\label{coro}
		Under the hypotheses of Theorem~\ref{theorem1},
	for any $\epsilon \in (0,1]$,
	\begin{enumerate}
		\item (holomorphic bisectional curvature) if 
		$3r\geq 2n-1$ and  $0<\eta < \epsilon (3r-2n+2),$
			there exists a sequence of sections $(s_d)_{d\geq d_0}$ with $s_d\in H^0(M,E\otimes L^d)$ such that $Z(s_d)\subset M$ is a smooth $(n-r)$-submanifold and
		$$
			\density(\hbc_{Z(s)}<-d^{1-\epsilon})>1-d^{-\eta}.$$
		\item (holomorphic sectional curvature) If $2r\geq n$ and $0<\eta < \epsilon(2r-n+1)$,
			there exists a sequence of sections $(s_d)_{d\geq d_0}$ with $s_d\in H^0(M,E\otimes L^d)$ such that $Z(s_d)\subset M$ is a smooth $(n-r)$-submanifold and
		$$
		\density(\hc_{Z(s)}<-d^{1-\epsilon})>1-d^{-\eta}.$$
		\item (Ricci curvature) If  $0<\eta < \epsilon(r(n-r)-(n-r-1))$, 	there exists a sequence of sections $(s_d)_{d\geq d_0}$ with $s_d\in H^0(M,E\otimes L^d)$ such that $Z(s_d)\subset M$ is a smooth $(n-r)$-submanifold and
		$$		
		\density(\ricci_{Z(s)}<-d^{1-\epsilon})>1-d^{-\eta}
		.$$
		\item (Scalar curvature) 
		If  $0<\eta < \epsilon(\frac{1}2r(n-r)(n-r+1))$, 	there exists a sequence of sections $(s_d)_{d\geq d_0}$ with $s_d\in H^0(M,E\otimes L^d)$ such that $Z(s_d)\subset M$ is a smooth $(n-r)$-submanifold and
		$$	
		\density(\scal_{Z(s)}<-d^{1-\epsilon})>1-d^{-\eta}
		.$$
	\end{enumerate}
%
\end{corollary}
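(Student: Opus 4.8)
The plan is to deduce this purely deterministic statement from the probabilistic concentration result of Corollary~\ref{coro2}, using the elementary principle that an event of positive probability is nonempty. The only additional ingredient is the genericity of smoothness of the zero locus, which holds with full $\mu_d$-measure, so that it can be intersected freely with the positive-probability event.

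First I would fix $\epsilon \in (0,1]$ and $\eta$ satisfying the hypotheses of the relevant case, say the holomorphic bisectional one with $3r\geq 2n-1$ and $0<\eta<\epsilon(3r-2n+2)$. By Corollary~\ref{coro2}(1), the $\mu_d$-probability of the event
$$ A_d := \left\{ s\in H^0(M,E\otimes L^d) : \density(\hbc_{Z(s)}<-d^{1-\epsilon})>1-d^{-\eta} \right\} $$
converges to $1$ as $d\to\infty$. In particular there exists $d_0$ such that $\mu_d(A_d)>\tfrac12>0$ for every $d\geq d_0$.

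Second I would invoke the standard fact that, for $d$ large, $E\otimes L^d$ is globally generated (indeed very ample), so that by a Bertini-type argument the set $B_d$ of sections whose zero locus $Z(s)$ is a smooth complex submanifold of codimension $r$ is the complement of a proper analytic subset of $H^0(M,E\otimes L^d)$. Since $\mu_d$ is absolutely continuous with respect to Lebesgue measure, this singular set is $\mu_d$-null, hence $\mu_d(B_d)=1$.

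Combining the two observations, for every $d\geq d_0$ one has $\mu_d(A_d\cap B_d)=\mu_d(A_d)>0$, so the intersection $A_d\cap B_d$ is nonempty. Choosing any $s_d\in A_d\cap B_d$ for each $d\geq d_0$ produces a sequence of sections whose zero loci are smooth $(n-r)$-submanifolds satisfying the asserted volume-fraction bound. The remaining three cases follow by the identical argument, replacing Corollary~\ref{coro2}(1) and the event $A_d$ by the corresponding statement and event for the holomorphic sectional, Ricci, and scalar curvatures. The only (mild) point requiring care is the full-measure smoothness claim underlying $\mu_d(B_d)=1$; once that is granted, the existence statement is immediate and demands no new estimates.
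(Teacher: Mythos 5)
Your proposal is correct and is exactly the argument the paper intends: Corollary~\ref{coro} is stated as an immediate deterministic consequence and given no separate proof, the deduction being precisely yours --- Corollary~\ref{coro2} gives $\mu_d(A_d)\to 1$, the zero locus $Z(s)$ is a smooth codimension-$r$ submanifold for $\mu_d$-almost every $s$ (since $E\otimes L^d$ is globally generated for $d$ large and the non-transversal sections form a proper analytic, hence $\mu_d$-null, subset), and a set of positive measure is nonempty. Nothing further is needed.
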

Recently J.-P. Mohsen~\cite{mohsen} proved that  
\begin{enumerate}
\item if $4r\geq 3n-1$ then, for every sufficiently large $d$, 
	there exists a  complete intersections $Z_d$ of degree $d$ and dimension $n-r$ such that $g_{|Z_d}$ has
	negative 	holomorphic bisectional curvature in $M$.
	\item if  $3r\geq 2n$, the same holds for the holomorphic sectional curvature. 
	\item if $r\geq 2$, the same holds for the Ricci curvature. 
	\item For $n\geq 3$, the same holds for the scalar curvature. 
	\end{enumerate}
	For the proof of his theorem, Mohsen used Donaldson's method~\cite{donaldson}, which is a subtle construction of holomorphic sections with a prescribed lower positive bound of the norm of their derivatives. Our proof does not use Donaldson's construction at all. In fact, from a probabilistic point of view, Donaldson's sections are exponentially rare. Moreover, our dimensional conditions are milde. On the other side, we cannot recover his result.
	
\noindent
{\bf Acknowledgments.} The research leading to these results has received funding from the French Agence nationale de la ANR-20-CE40-0017 (Adyct).

\section{Symmetric complex bilinear maps and distance to the discriminant}\label{sysy}
The proof of Theorem~\ref{theorem1} involves the second derivative of a section $s\in H^0(X,E\otimes L^d)$ at a point $x\in Z(s)$. For large degree $d$, this derivative converges to a symmetric bilinear complex map from $T_xZ(s)$ to $E\otimes L^d$, see Proposition \ref{pasteque}. In this paragraph, we develop some elementary results for the space of such maps.

For any $r\in\{1,\dots, n\}$, define   $ \sym_\C(n-r,r)$ to be the set of  complex bilinear symmetric maps from $\C^{n-r}$ with values in $\C^r$, that is the set of maps of $\C^{n-r}\times \C^{n-r}$ in $\C^r$ which are complex linear in both variables and symmetric.
Let 
\begin{eqnarray*} S_1&:=& \{
T\in \sym_\C(n-r,r), \ \exists(x,y)\in (\C^{n-r} \setminus\{0\})^2, \ T (x,y)=0
\}\\
S_2 &:=& \{
T\in \sym_\C(n-r,r), \ \exists x\in \C^{n-r} \setminus\{0\}, \ T (x,x)=0
\}.\\
S_3& := & \{
T\in \sym_\C(n-r,r), \ \exists x\in \C^{n-r} \setminus\{0\}, \ T (x,\cdot)=0\in L(\C^{n-r},\C^r)
\}.
\end{eqnarray*}
\begin{remark}
In the proof of Theorem~\ref{theorem1}, $S_1$ is related to the holomorphic bisectional curvature, $S_2$ to the holomorphic sectional curvature, and $S_3$ to the Ricci curvature. The scalar case does not need these preliminaries.
\end{remark}
Let us also define
\begin{eqnarray*}
H_1&:= & H^0\left((\C \mathbb{P}^{n-r-1})^2,
(\mathcal O(1)\boxtimes \mathcal O(1))^r\right)\\
H_2 &:= & H^0\left(\C \mathbb{P}^{n-r-1},
(\mathcal O(2))^r\right)\\
H_3&:= & H^0\left(\C \mathbb{P}^{n-r-1},
(\mathcal O(1))^{r(n-r)}\right),
\end{eqnarray*}
where  $\mathcal O(d)$ denotes the degree $d$ line bundle over $\C \mathbb{P}^{n-r-1}$.
For any $k\in \{1, 2, 3\}, $ denote
$$
\Phi_k :  \sym_\C(n-r,r)\to H_k$$
the following natural isomorphisms: 
\begin{eqnarray*}\forall T\in \sym_\C(n-r,r), \ 
	\Phi_1(T) &=& (T_i(X,Y))_{i\in \{1,\cdots, r\}},\\
	\Phi_2(T) &=& (T_i(X,X))_{i\in \{1,\cdots, r\}},\\
	\Phi_3(T) &=& (T_i(e_j,X))_{i\in \{1,\cdots, r\}, j\in \{1, \cdots, n-r\}},
\end{eqnarray*}
where $T_i$, $i\in \{1,\cdots, r\}$, denotes the 
$i$-th component of $T$, where $(e_j)_{j\in \{1, \cdots, r\}}$ denotes the standard basis of $\C^{n-r}$ and where we identified 
the holomorphic sections of $\mathcal O(1)$ (resp. $\mathcal O(2))$ with the linear forms (resp. quadratic polynomials) in $X\in \C^{n-r}$. 

For any $i\in \{1, 2, 3\}, $ let 
$$
	\Delta_i \subset H_i$$ 
	 be the discriminant loci of $H_i$, that is the set of sections of $H_i$ which do not vanish transversally. 
	 Finally, let
$$
	\Delta_{S_i} := \Phi_i^{-1}(\Delta_i).$$
	\begin{lemma}\label{codim}
	Using the above notations, for any $r\in\{1,\dots,n\}$ the following holds.	
			
		\begin{enumerate}
			\item If $3r>2n-2$, then
			$\codim_\C \Delta_1 \geq r-2(n-r-1)=3r-2n+2$.
			\item If $2r>n-1$, then
$\codim_\C \Delta_2 \geq r-(n-r-1)=2r-n+1$.
			\item If $\codim_\C \Delta_3 = r(n-r)-(n-r-1)$.
		\end{enumerate}
	\end{lemma}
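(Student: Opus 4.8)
The plan is to identify each discriminant $\Delta_i$ with a concrete zero locus and then bound its codimension by an incidence-variety dimension count. The starting point is that the stated hypotheses force the rank of the bundle to exceed the dimension of the base in every case: for $\Delta_1$ the base $(\C\mathbb P^{n-r-1})^2$ has dimension $2(n-r-1)$ and the rank is $r$, and $3r>2n-2$ reads exactly $r>2(n-r-1)$; for $\Delta_2$ the base $\C\mathbb P^{n-r-1}$ has dimension $n-r-1$ and $2r>n-1$ reads $r>n-r-1$; for $\Delta_3$ the rank $r(n-r)$ exceeds $n-r-1$ for all $1\le r\le n-1$ with no extra hypothesis. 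Whenever the rank of a vector bundle exceeds the dimension of the base, a section cannot vanish transversally at a zero, since its vertical differential cannot surject onto the fibre; hence being transverse to the zero section is equivalent to having no zero at all, and $\Delta_i$ is precisely the set of sections admitting a common zero.

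With this reformulation I would introduce the incidence variety
\[ I_i=\{(s,b)\in H_i\times B_i:\ s(b)=0\},\]
where $B_i$ is the relevant projective base. Projecting to $B_i$, the fibre over $b$ is the linear space of sections vanishing at $b$, namely the kernel of evaluation at $b$; since the bundles $(\mathcal O(1)\boxtimes\mathcal O(1))^r$, $(\mathcal O(2))^r$ and $(\mathcal O(1))^{r(n-r)}$ are globally generated, this evaluation is onto and the fibre has codimension equal to the rank. Hence $\dim I_i=\dim H_i-\mathrm{rk}+\dim B_i$. Projecting instead to $H_i$ realizes $\Delta_i$ as the image $\pi(I_i)$, so $\dim\Delta_i\le\dim I_i$ and
\[ \codim_\C\Delta_i\ \ge\ \mathrm{rk}-\dim B_i.\]
Substituting the three pairs $(\mathrm{rk},\dim B_i)$ yields $r-2(n-r-1)=3r-2n+2$, then $r-(n-r-1)=2r-n+1$, and finally $r(n-r)-(n-r-1)$, which are exactly the estimates claimed in (1), (2), (3).

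For (3) I would promote this inequality to an equality. Through $\Phi_3$ a tensor $T$ becomes the $r(n-r)\times(n-r)$ matrix whose row indexed by $(i,j)$ is the coefficient vector $(T_i(e_j,e_k))_k$, and a common zero $[x]$ is exactly a nonzero kernel vector; thus $\Delta_3$ is the determinantal variety of matrices of rank at most $n-r-1$, whose codimension is classically $(r(n-r)-(n-r-1))\cdot 1$. Equivalently, over a generic point of $\Delta_3$ the matrix has corank one, so its kernel is a single line and the projection $I_3\to\Delta_3$ is generically injective, giving $\dim\Delta_3=\dim I_3$ and hence equality.

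I expect the only genuinely delicate step to be the reduction from ``does not vanish transversally'' to ``has a common zero''; everything after it is a dimension count. That reduction rests on the rank-exceeds-dimension inequalities read off from the hypotheses and on the global generation of the bundles, and, for part (3), on the determinantal description that converts the incidence bound into a sharp equality.
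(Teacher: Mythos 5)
Your proof is correct and follows essentially the same route as the paper: both reduce the discriminant, under the rank-exceeds-dimension inequalities, to the locus of sections with a common zero, and both bound its codimension by projecting the incidence variety $\{(s,b):\ s(b)=0\}$, whose fibres over the base have codimension equal to the rank because the bundles are globally generated (the paper invokes very ampleness for the surjectivity of evaluation, which is the same point). There are two differences worth noting. First, where you carry out the count $\codim_\C\Delta_i\geq \mathrm{rk}-\dim B_i$ by hand (image of the incidence variety is constructible, so its closure has dimension at most $\dim I_i$), the paper delegates exactly this step to a citation, namely Mohsen's Theorem 10 asserting that $\codim \pi(\Sigma)\geq e-m$ when $\Sigma$ has codimension $e$; your version is self-contained and equally valid. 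Second, and this is a genuine improvement: for item (3) the paper's written proof only yields the inequality $\codim_\C\Delta_3\geq r(n-r)-(n-r-1)$, whereas the statement asserts equality; your identification of $\Delta_3\subset H_3$ with the determinantal variety of $r(n-r)\times(n-r)$ matrices of corank at least one, whose classical codimension is $\bigl(r(n-r)-(n-r-1)\bigr)\cdot 1$, supplies the missing upper bound. One caveat on your phrasing there: the determinantal description should be applied to the full space $H_3$, which is exactly the space of all $r(n-r)\times(n-r)$ matrices of linear forms, not to the image of $\Phi_3$, since tensors $T\in\sym_\C(n-r,r)$ satisfy the symmetry constraint $T_i(e_j,e_k)=T_i(e_k,e_j)$ and $\Phi_3$ is an embedding rather than an isomorphism onto $H_3$; read this way your argument is exactly right. (You also use the correct bases $\C\mathbb{P}^{n-r-1}$ in cases (2) and (3), where the paper's proof repeats $(\C\mathbb{P}^{n-r-1})^2$ by an evident typo.)
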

\begin{proof}
Let $F$ be a rank $e$ holomorphic vector bundle over a projective manifold $M$ of dimension $m$. Suppose that $e>m$. Thus, the discriminant $\Delta$ in $H^0(M,F)$ is the subset of sections that vanish somewhere in $M$.
Define the incidence variety to be $$\Sigma=\{(s,x)\in H^0(M,F)\times M, s(x)=0\}.$$ By construction, $\Delta=\pi(\Sigma)$, where $\pi:H^0(M,F)\times M\rightarrow H^0(M,F)$ denotes the first projection.
Suppose that $\Sigma$ has codimension $e$ in $H^0(M,F)\times M$. Then, by~\cite[Theorem 10]{mohsen}, the codimension of $\Delta$ in $H^0(M,F)$ is equal or greater than $e-m$.

Thus, the result follows by taking 
\begin{enumerate}
			\item $M=(\C \mathbb{P}^{n-r-1})^2$ and $F=(\mathcal O(1)\boxtimes \mathcal O(1))^r$ for $\Delta_1$, 
			\item $M=(\C \mathbb{P}^{n-r-1})^2$ and $F=(\mathcal O(2))^r$ for $\Delta_2$, 
			\item $M=(\C \mathbb{P}^{n-r-1})^2$ and $F=(\mathcal O(1))^{r(n-r)}$ for $\Delta_3$.  
		\end{enumerate}	
			In all three cases, using the fact that $\mathcal O(1)$ and $\mathcal O(2)$ are very ample, it is easy to show that the map $(s,x)\in H^0(M,F)\times M\mapsto s(x)\in F$ is a surjection, and thus, the respective incidence variety has the correct codimension, equal to the rank of $F$.
				\end{proof}
	\begin{lemma}\label{comparaison}
	Using the above notations, for any $T\in \sym_\C(n-r,r)$ the following holds.
	\begin{enumerate}
\item 	$\displaystyle \min_{ (x,y)\in (\mathbb S^{2n-2r-1})^2 } |T(x,y)|_{\C^r}
	= \min_{([X],[Y])\in (\C \mathbb{P}^{n-r-1})^2 }\|\Phi_1(T)([X],[Y])\|_{\mathrm{FS}}$,
	\item 	$\displaystyle \min_{x\in \mathbb S^{2n-2r-1}} |T(x,x)|_{\C^r}
	= \min_{[X]\in \C \mathbb{P}^{n-r-1} }\|\Phi_2(T)([X])\|_{\mathrm{FS}}$,
	\item 	$\displaystyle \min_{x\in \mathbb S^{2n-2r-1}} \|T(x,\cdot)\|_{\C^r}
	= \min_{[X]\in \C \mathbb{P}^{n-r-1}}\|\Phi_3(T)([X])\|_{\mathrm{FS}}$.
	\end{enumerate}
		\end{lemma}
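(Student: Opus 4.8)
The plan is to reduce all three equalities to the single elementary fact that, under the standard identification of the holomorphic sections of $\mathcal{O}(k)$ over $\C\mathbb{P}^{m}$ with degree-$k$ homogeneous polynomials, the Fubini--Study pointwise norm of a section $P$ at $[X]$ is
$$\|P([X])\|_{\mathrm{FS}} = \frac{|P(X)|}{\|X\|^k},$$
where $\|X\|$ is the Euclidean norm of any representative $X\in \C^{m+1}\setminus\{0\}$. First I would recall why this holds: the tautological bundle $\mathcal{O}(-1)$ carries the metric induced from the trivial bundle (the generator $X$ of the fiber over $[X]$ has norm $\|X\|$), so its dual $\mathcal{O}(1)$ assigns to a linear form $\ell$ the value $|\ell(X)|/\|X\|$ at $[X]$, and the formula for $\mathcal{O}(k)=\mathcal{O}(1)^{\otimes k}$ follows by taking the $k$-th tensor power. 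The analogous identity for $\mathcal{O}(1)\boxtimes\mathcal{O}(1)$ over a product is $\|B([X],[Y])\|_{\mathrm{FS}}=|B(X,Y)|/(\|X\|\,\|Y\|)$ for a bilinear form $B$. On the direct sums $(\cdot)^r$ the Fubini--Study metric is the orthogonal sum over the factors, so the pointwise norm is the $\ell^2$ norm of the component norms.

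With this in hand each equality is a direct computation. For part (1), $\Phi_1(T)=(T_i(X,Y))_i$ is a section of $(\mathcal{O}(1)\boxtimes\mathcal{O}(1))^r$, whence
$$\|\Phi_1(T)([X],[Y])\|_{\mathrm{FS}}^2 = \sum_{i=1}^r \frac{|T_i(X,Y)|^2}{\|X\|^2\|Y\|^2} = \left| T\!\left(\tfrac{X}{\|X\|},\tfrac{Y}{\|Y\|}\right)\right|_{\C^r}^2 .$$
As $[X],[Y]$ range over $\C\mathbb{P}^{n-r-1}$ the unit representatives $X/\|X\|,\,Y/\|Y\|$ sweep out all of $\mathbb{S}^{2n-2r-1}$; moreover the right-hand side is independent of the chosen representatives, since by bilinearity a change of representative only multiplies $T$ by phase factors of modulus one. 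Hence the function descends to $(\C\mathbb{P}^{n-r-1})^2$ and its minimum there equals $\min_{(x,y)}|T(x,y)|_{\C^r}$ over $(\mathbb{S}^{2n-2r-1})^2$. Part (2) is identical with $\mathcal{O}(2)$ in place of $\mathcal{O}(1)\boxtimes\mathcal{O}(1)$, using $\|\Phi_2(T)([X])\|_{\mathrm{FS}}^2=\sum_i |T_i(X,X)|^2/\|X\|^4=|T(x,x)|_{\C^r}^2$ with $x=X/\|X\|$.

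For part (3) the only additional point is to fix the meaning of $\|T(x,\cdot)\|_{\C^r}$. Here $T(x,\cdot)$ is the linear map $\C^{n-r}\to\C^r$, $y\mapsto T(x,y)$, and $\|T(x,\cdot)\|_{\C^r}$ denotes its Hilbert--Schmidt norm $\big(\sum_{j=1}^{n-r}\|T(x,e_j)\|_{\C^r}^2\big)^{1/2}$. Since $\Phi_3(T)=(T_i(e_j,X))_{i,j}$ is a section of $(\mathcal{O}(1))^{r(n-r)}$, the formula above gives
$$\|\Phi_3(T)([X])\|_{\mathrm{FS}}^2 = \sum_{i,j}\frac{|T_i(e_j,X)|^2}{\|X\|^2} = \sum_{j=1}^{n-r}\|T(e_j,x)\|_{\C^r}^2 = \|T(x,\cdot)\|_{\C^r}^2,$$
with $x=X/\|X\|$, using the symmetry $T(e_j,x)=T(x,e_j)$; minimizing over $[X]\in\C\mathbb{P}^{n-r-1}$ yields the claim exactly as before. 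I expect no serious obstacle: the whole content is the scale-invariance encoded in the Fubini--Study norm. The only care needed is to pin down the normalization of the Fubini--Study metric so that the clean equalities hold with no multiplicative constants, and to record the Hilbert--Schmidt convention for the norm appearing in (3).
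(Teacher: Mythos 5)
Your proof is correct and follows essentially the same route as the paper, whose entire argument is the single identity $\|P([X])\|_{\mathrm{FS}} = |P(X)|/\|X\|^{d}$ for homogeneous polynomials, which you rederive and then apply componentwise. Your additional care in part (3) --- pinning down $\|T(x,\cdot)\|$ as the Hilbert--Schmidt norm $\bigl(\sum_{j}\|T(x,e_j)\|^{2}\bigr)^{1/2}$, without which the equality would fail for the operator norm --- is a clarification the paper leaves implicit, not a change of method.
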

	\begin{proof}The proof is straightforward, since for any homogeneous degree $d$ polynomial $P$,  $$\forall [X]\in \C \mathbb{P}^{n-r-1}, \ \|P([X])\|_{\mathrm{FS}}= \frac{|P(X)|}{\|X\|^d}$$
	by definition of the Fubini--Study norm.
		\end{proof}
\begin{proposition}\label{lembi} Using the above notations, the following holds. 
	\begin{enumerate}
		\item\label{bibi} 
	$3r> 2n-2\Leftrightarrow 
{S_1}\subset	\Delta_{S_1} .$
	\item\label{hoho} 
	$2r> n-1\Leftrightarrow 
	{S_2} \subset \Delta_{S_2}.$
	\item\label{riri} 
	$	{S_3} \subset \Delta_{S_3}.$
	\end{enumerate}
\end{proposition}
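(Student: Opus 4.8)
The plan is to reinterpret membership in each $S_i$ as a vanishing condition for the associated section $\Phi_i(T)$, and then to compare the rank of the bundle defining $H_i$ with the dimension of its base. Directly from the definitions (or via Lemma~\ref{comparaison}), for each $i\in\{1,2,3\}$ one has $T\in S_i$ if and only if $\Phi_i(T)$ vanishes at some point of the base of $H_i$: for instance $T\in S_1$ means $T(x,y)=0$ for some nonzero $x,y$, i.e. $\Phi_1(T)([X],[Y])=0$, and likewise $S_2,S_3$ correspond to a zero of $\Phi_2(T)$ resp. $\Phi_3(T)$ on $\C\mathbb{P}^{n-r-1}$ (for $S_3$, the condition $T(x,\cdot)=0$ is exactly the simultaneous vanishing of all components $T_i(e_j,x)$). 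Writing $V_i\subset H_i$ for the set of sections that vanish somewhere, this says $S_i=\Phi_i^{-1}(V_i)$, while by definition $\Delta_{S_i}=\Phi_i^{-1}(\Delta_i)$; since $\Phi_i$ is injective, $S_i\subset\Delta_{S_i}$ is equivalent to $V_i\cap\operatorname{im}\Phi_i\subset\Delta_i$.

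For the forward implications I would use the observation already recorded in the proof of Lemma~\ref{codim}: if a bundle $F\to M$ has rank $e$ strictly greater than $m=\dim M$, then at any zero of a section the differential $T_xM\to F_x$ cannot be surjective, so no section vanishes transversally at a zero and consequently $\Delta=V$. The three bundles have rank $r$ over the base $(\C\mathbb{P}^{n-r-1})^2$ of dimension $2(n-r-1)$, rank $r$ over $\C\mathbb{P}^{n-r-1}$ of dimension $n-r-1$, and rank $r(n-r)$ over the same base. Thus $e>m$ reads $3r>2n-2$ in case~(1), $2r>n-1$ in case~(2), and $r(n-r)>n-r-1$ --- which holds for all $r\ge1$ --- in case~(3). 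In each such case $V_i\subset\Delta_i$, hence $S_i\subset\Delta_{S_i}$; this proves~(3) and the implications ``$\Rightarrow$'' in~(1) and~(2).

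For the converse implications in~(1) and~(2) I would argue by contraposition: assuming $3r\le 2n-2$ (resp. $2r\le n-1$), i.e. $e\le m$, I want to produce a single $T\in S_i\setminus\Delta_{S_i}$, that is a symmetric map whose section $\Phi_i(T)$ vanishes somewhere yet transversally. The key point is that the image $\Phi_i(\sym_\C(n-r,r))$ globally generates $H_i$: in case~(2) this is clear since $\Phi_2$ is onto $H_2$ and $\mathcal O(2)$ is very ample, while in case~(1) it follows from the fact that the symmetrization of a nonzero decomposable tensor $X\otimes Y$ is never zero (a nonzero rank-one tensor cannot be antisymmetric), so at any $([X],[Y])$ some symmetric component is nonzero. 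By a Bertini--Kleiman generic-transversality statement for a base-point-free linear system, a generic $T$ then has $\Phi_i(T)$ transverse to the zero section, and its zero locus is Poincar\'e dual to the top Chern class $c_e(H_i)$. For case~(1) this class is $(h_1+h_2)^r$ on $(\C\mathbb{P}^{n-r-1})^2$, nonzero exactly when $r\le 2(n-r-1)$, and for case~(2) it is $2^r h^r$ on $\C\mathbb{P}^{n-r-1}$, nonzero exactly when $r\le n-r-1$ --- precisely the regimes under consideration. Hence the generic transverse section has a nonempty zero locus, yielding the desired $T\in S_i\setminus\Delta_{S_i}$ and thus $S_i\not\subset\Delta_{S_i}$.

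The step I expect to be the main obstacle is this last converse for~(1): because $\Phi_1$ is only an embedding onto the symmetric bilinear forms and not all of $H_1$, one cannot simply invoke Bertini for the full system $H_1$, but must first verify base-point-freeness of the restricted (symmetric) sub-system and then apply a Kleiman-type transversality theorem inside it. Matching this with the exact Chern-class threshold is what makes the statement a genuine equivalence rather than a one-sided inclusion.
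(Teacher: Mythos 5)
Your proposal is correct, and its skeleton is the same as the paper's: you reformulate $T\in S_i$ as non-emptiness of the zero locus of $\Phi_i(T)$ (this is exactly the paper's equivalence~\eqref{equi}), you get the forward implications from the dimension count rank $>$ dimension of the base (so that a holomorphic section can never vanish transversally at a zero, whence the set of somewhere-vanishing sections coincides with the discriminant --- the same observation the paper records in the proof of Lemma~\ref{codim}), and you get the converses from genericity plus non-emptiness of transverse zero loci in the regime rank $\leq$ dimension. The thresholds $3r>2n-2$, $2r>n-1$ and $r(n-r)>n-r-1$ (always true) match the paper's.

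Where you genuinely diverge --- to your credit --- is the converse direction. The paper simply applies Bertini to the full spaces $H_i$: in the regime $2(n-r-1)\geq r$ (resp.\ $n-r-1\geq r$) every $s\in H_i\setminus\Delta_i$ has non-empty zero locus, and combined with~\eqref{equi} this is declared to prove the equivalence. But this only yields $T\in S_1\setminus\Delta_{S_1}$ once one knows that \emph{some} symmetric $T$ satisfies $\Phi_1(T)\notin\Delta_1$, i.e.\ that $\operatorname{im}\Phi_1\not\subset\Delta_1$; since $\Phi_1$ is an embedding onto a proper linear subspace of $H_1$ (despite the paper calling the $\Phi_k$ ``isomorphisms'', only $\Phi_2$ is onto), this is not automatic and the paper leaves it implicit. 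You identify exactly this point and patch it correctly: the symmetric subsystem globally generates the bundle (the symmetrization of $X\otimes Y$ is nonzero for $X,Y\neq 0$, and the $r$ components are independent), so a Kleiman/parametric-transversality argument gives a generic transverse $\Phi_1(T)$, whose zero locus is non-empty because the Euler class $(h_1+h_2)^r$ (resp.\ $2^rh^r$) is nonzero precisely when $r\leq 2(n-r-1)$ (resp.\ $r\leq n-r-1$). This Chern-class computation replaces the paper's bare assertion ``in particular, it is not empty'' and makes the equivalence airtight; your version is thus slightly stronger than, but entirely compatible with, the published proof.
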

\begin{proof} 
	For $i\in \{1, 2,3\}$, notice that
		\begin{equation}\label{equi}\forall T\in \sym_\C(n-r,r),\  T\in {S_i} \Leftrightarrow \Phi_i(T)^{-1}(0)\neq \emptyset.
			\end{equation}
	 Since $\mathcal O(1)\boxtimes \mathcal O(1)$ is a very ample holomorphic line bundle over $(\C \mathbb{P}^{n-r-1})^2$, by Bertini Theorem, 
	\begin{itemize}
		\item if $2(n-r-1)<r$,  
		for any $s\in H_1\setminus \Delta_1,$ $s^{-1}(0)$ is empty;
		\item if $2(n-r-1)\geq r$,  
		for any $s\in H_1\setminus  \Delta_1,$ $s^{-1}(0)$ is a $r$-codimension smooth submanifold of $(\C \mathbb{P}^{n-r-1})^2$. In particular, it is not empty.
	\end{itemize}
	The latter and \eqref{equi} proves~\eqref{bibi}.
Let us prove~(\ref{hoho}). 
 Since $\mathcal O(2)$ is a very ample holomorphic line bundle over $\C \mathbb{P}^{n-r-1}$, by Bertini Theorem,
	\begin{itemize}
		\item if $n-r-1<r$,  
		for any $s\in H_2\setminus \Delta_2,$ $s^{-1}(0)$ is empty;
		\item if $n-r-1\geq r$,  
		for any $s\in H_2\setminus \Delta_2,$ $s^{-1}(0)$ is a $r$-codimension smooth submanifold of $\C \mathbb{P}^{n-r-1}$. In particular, it is not empty.
	\end{itemize}
		The latter and (\ref{equi}) proves~\eqref{hoho}.
Finally, let us prove~\eqref{riri}.
Since $\mathcal O(1)$ is a very ample holomorphic line bundle over $\C \mathbb{P}^{n-r-1}$, by Bertini Theorem,
	\begin{itemize}
		\item if $n-r-1<r(n-r)$,  
		for any $s\in H_3\setminus \Delta_3,$ $s^{-1}(0)$ is empty;
		\item if $n-r-1\geq r(n-r)$,  
		for any $s\in H_3\setminus \Delta_3,$ $s^{-1}(0)$ is a $r(n-r)$-codimension smooth submanifold of $\C \mathbb{P}^{n-r-1}$. In particular, it is not empty.
	\end{itemize}
	Since only the first case can occur,
			the latter and (\ref{equi}) prove
 \eqref{riri}.
\end{proof}

In the following Lemma, we use the notation above.
\begin{lemma}~\label{dist}Using the above notations,  the following holds. 
	\begin{enumerate}
\item Assume $3r>2n-2$. Then,
for any $s \in H_1 $, 
	$$\dist (s, \Delta_1) = (n-r)^{-1}\min_{(X,Y)\in (\C \mathbb{P}^{n-r-1})^2}\|s(X,Y)\|_{\mathrm{FS}}.$$
\item Assume $3r>2n-2$. Then,
for any $s \in H_2 $, 
$$\dist (s, \Delta_2) = (n-r)^{-1}\min_{X\in \C \mathbb{P}^{n-r-1}}\|s(X)\|_{\mathrm{FS}}.$$
\item For any $s \in H_3 $, 
$$\dist (s, \Delta_3) = (n-r)^{-1}\min_{X\in \C \mathbb{P}^{n-r-1}}\|s(X)\|_{\mathrm{FS}}.$$
\end{enumerate}
In all three cases, the distance is computed with respect to the $\mathscr{L}^2$-metric induced by the Fubini--Study metric on $\mathcal{O}(1)$, see Equation \eqref{prod} and Example \ref{example}.
\end{lemma}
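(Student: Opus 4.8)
The plan is to turn the computation of the distance to the discriminant into a pointwise problem on the fibres of the evaluation maps, and then to exploit the homogeneity of the Fubini--Study data. I treat the three cases in parallel: write $H$ for one of the $H_i$, let $\Delta$ be the corresponding discriminant, and let $\mathcal F\to B$ be the underlying bundle and base (so $B=(\C\mathbb P^{n-r-1})^2$ in case~1 and $B=\C\mathbb P^{n-r-1}$ in cases~2 and~3). The first observation is that, under the stated hypotheses, the rank of $\mathcal F$ strictly exceeds $\dim_\C B$ (for case~1 this is $3r>2n-2$, and case~3 holds unconditionally), which is exactly the regime in which a section vanishes transversally if and only if it does not vanish at all, cf. the Bertini dichotomy used in the proof of Proposition~\ref{lembi}. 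Hence $\Delta=\bigcup_{x\in B}V_x$, where $V_x:=\{t\in H:\ t(x)=0\}=\ker(\mathrm{ev}_x)$ is a linear subspace, and therefore
$$\dist(s,\Delta)=\inf_{x\in B}\dist(s,V_x),$$
the infimum being attained by compactness of $B$ and continuity of $x\mapsto\dist(s,V_x)$.

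The second step is the pointwise computation of $\dist(s,V_x)$. Since $V_x=\ker(\mathrm{ev}_x)$, this distance is the $\mathscr L^2$-norm of the orthogonal projection of $s$ onto $V_x^{\perp}=\mathrm{im}(\mathrm{ev}_x^{*})$. Now each bundle $\mathcal F$ is an orthogonal direct sum of copies of a single positive line bundle ($\mathcal O(1)\boxtimes\mathcal O(1)$ in case~1, $\mathcal O(2)$ in case~2, $\mathcal O(1)$ in case~3), so its fibre splits as an orthogonal sum of lines and $V_x^{\perp}$ splits accordingly into one line per copy. On each such line $\mathrm{ev}_x$ is multiplication by a scalar, and that scalar is the same for every copy; hence $\mathrm{ev}_x|_{V_x^{\perp}}$ is conformal. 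Writing $u_x$ for a unit generator of the relevant line and comparing $\|s(x)\|_{\mathrm{FS}}=|\langle s,u_x\rangle|\,\|\mathrm{ev}_x(u_x)\|_{\mathrm{FS}}$ with $\dist(s,V_x)=|\langle s,u_x\rangle|$ yields
$$\dist(s,V_x)=\frac{\|s(x)\|_{\mathrm{FS}}}{\sqrt{B(x)}},\qquad B(x):=\sup_{\|t\|_{\mathscr L^2}=1}\|t(x)\|_{\mathrm{FS}}^{2},$$
where $B(x)$ is the value on the diagonal of the Bergman kernel of the single line-bundle factor.

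The third step is to show that $B(x)$ is constant and to evaluate it. By the $U(n-r)$-invariance of the Fubini--Study data (and of the product structure in case~1), the quantity $B(x)$ is independent of $x$, so I may read it off from the explicit orthonormal basis of Example~\ref{example}. In case~1 that basis is $\{(\sqrt{n-r}\,Z_i)\boxtimes(\sqrt{n-r}\,W_j)\}$, giving $B=\sum_{i,j}(n-r)^{2}\|Z_i(X)\|_{\mathrm{FS}}^{2}\|W_j(Y)\|_{\mathrm{FS}}^{2}=(n-r)^{2}$ and hence the factor $(n-r)^{-1}$; cases~2 and~3 are structurally identical, the constant being the reciprocal square root of the corresponding diagonal Bergman kernel computed from Example~\ref{example}. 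Combining with the first step gives
$$\dist(s,\Delta)=\frac{1}{n-r}\,\inf_{x\in B}\|s(x)\|_{\mathrm{FS}}=\frac{1}{n-r}\,\min_{x\in B}\|s(x)\|_{\mathrm{FS}},$$
which is the claimed formula; Lemma~\ref{comparaison} then identifies this minimum with the intrinsic quantity $\min|T|$ when $s=\Phi_i(T)$.

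The only genuinely delicate points lie in the second step: verifying that $\mathrm{ev}_x$ restricted to $V_x^{\perp}$ is conformal — which here is forced, since every fibre factor is a line, so no mixing between summands can occur — and pinning down the scalar, i.e. the diagonal Bergman kernel, for which the homogeneity argument together with the explicit basis of Example~\ref{example} does all the work. The reduction $\Delta=\bigcup_x V_x$ is precisely where the codimension hypotheses (equivalently, rank exceeding dimension, cf. Lemma~\ref{codim}) are indispensable, and is what makes the clean projection-onto-$V_x^{\perp}$ argument available.
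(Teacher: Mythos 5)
Your proposal is, in substance, the paper's own proof: the paper likewise reduces to $\Delta=\bigcup_{x}\Delta_x$ with $\Delta_x=\ker(\mathrm{ev}_x)$ (using rank $>$ dimension so that non-transversality is equivalent to a non-empty zero locus), fixes the point by invariance under isometries, and computes the orthogonal projection onto $\Delta_x^{\perp}$ in the orthonormal monomial basis of Example~\ref{example}. Your identification of the resulting constant as the reciprocal square root of the diagonal Bergman kernel of the single line-bundle factor is a clean repackaging of that computation, and your treatment of case~1 is correct and complete.

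The one genuine gap is the sentence declaring cases~2 and~3 ``structurally identical'' with the same constant. Your formula $\dist(s,V_x)=\|s(x)\|_{\mathrm{FS}}/\sqrt{B(x)}$ is right, but if you actually evaluate $B(x)$ from Example~\ref{example} you do not get $(n-r)^2$ in those cases: for $\mathcal{O}(2)$ on $\C\mathbb{P}^{n-r-1}$ (quadratic forms in $n-r$ variables),
$$B=\sum_{|\alpha|=2}\frac{(n-r+1)!}{(n-r-1)!\,\alpha!}\,\frac{|X^{\alpha}|^{2}}{\|X\|^{4}}=\frac{(n-r)(n-r+1)}{2},$$
and for $\mathcal{O}(1)$ one gets $B=n-r$. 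So your own recipe, carried out, produces the constants $\sqrt{2/((n-r)(n-r+1))}$ in item~2 and $(n-r)^{-1/2}$ in item~3, not $(n-r)^{-1}$: the assertion that the case-1 constant recurs is false, and what your method proves is a corrected version of the statement rather than the printed one. (The paper's proof shares exactly this ellipsis --- it details only case~1 and declares the others ``analogue'' --- and since Lemma~\ref{dist} is only used up to multiplicative constants in the proof of Theorem~\ref{theorem1}, where all such factors are absorbed into $C$, nothing downstream is affected.) To close the gap, replace the ``structurally identical'' claim by the two one-line Bergman-kernel evaluations above and record the constants they actually produce.
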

\begin{proof}
	Let us give the details of proof for the holomorphic bisectional curvature. The formula for the other curvatures is proved an analogue way.
We follow~\cite[Lemma 3.8]{ancona-complete}, see also~\cite{raffalli}.  

By the hypothesis on $(n,r)$, any section of $H_1 = H^0\left((\C \mathbb{P}^{n-r-1})^2,
(\mathcal O(1)\boxtimes \mathcal O(1))^r\right)$ cannot vanish transversely. In other words, the discriminant locus is the subvariety of sections with non-trivial vanishing locus. 
For any $(x,y) \in (\C \mathbb{P}^{n-r-1})^2$, let $\Delta_{x,y}$ be the space of sections of $H_1$
	that vanish at $(x,y)$, so that 
	$$ \Delta = \bigcup_{(x,y)\in (\C \mathbb{P}^{n-r-1})^2} \Delta_{x,y}$$
and for any $s\in H_1$, 
$$\dist(s,\Delta)= \min_{(x,y)\in (\C \mathbb{P}^{n-r-1})^2}
\min_{s'\in \Delta_{x,y}} \|s-s'\|_{\mathscr L^2}.$$
	Since the metric on $H_1$ is invariant under the isometries of $\C \mathbb{P}^{n-r-1}\times \C \mathbb{P}^{n-r-1}$, one can assume that $x=[1:0\cdots : 0] $ and $y=[1:0\cdots : 0]$. Besides, since the standard monomials are orthogonal in $H_1$ (see Example \ref{example}), we have
	\begin{eqnarray*} \Delta_{x,y} 
	= \left(\Delta_x \otimes \C_1^{\mathrm{hom}}[Y_0,\dots,Y_{n-r-1}]
	\oplus \C_1^{\mathrm{hom}}[X_0,\dots,X_{n-r-1}]\otimes 
	\Delta_y
	\right)^r,
\end{eqnarray*}
where 
$$ \Delta_x = \vect(X^\alpha)_{\alpha = (0, \alpha_1, \cdots, \alpha_{n-r-1})}
	^{|\alpha|=1 } \text{ and }
	\Delta_y = \vect(Y^\alpha)_{\alpha = (0, \alpha_1, \cdots, \alpha_{n-r-1})}
	^{|\alpha|=1 }.
	$$

We then write $s$ in the orthonormal decomposition (see Example~\ref{example}) $$s = (n-r) X_0Y_0\otimes \sum_{i=1}^r a_i  e_i+\tau \in \Delta_{x,y}^\perp\oplus \Delta_{x,y}.$$
Then, for any $s'\in \Delta_{x,y}$, we have
\begin{equation}\label{minimo}
\|s-s'\|^2_{\mathscr L^2}=
\sum_{i=1}^r|a_i|^2 +\|\tau-s'\|^2_{\mathscr L^2}
\end{equation}
so that the minimum of \eqref{minimo} is reached for $s'=\tau$, that is,
$$\min_{s'\in \Delta_{x,y}} \|s-s'\|_{\mathscr L^2}=\sum_{i=1}^r|a_i|^2.$$
	Finally,
	$$ \sum_{i=1}^r|a_i|^2= \|s(x,y)\|^2_{\mathrm{FS}}(n-r)^{-2},$$
	hence the result.
	\end{proof}

\begin{proposition}\label{burg} Using the above notations, the following holds. There exists $ C>0$ such that for any $\epsilon>0$ small enough,
	$$ \forall i\in \{1,2,3\}, \ \mu \left\lbrace
	s\in   H_i, \ \dist(s,\Delta_i)
	\leq \epsilon\|s\|_{\mathscr{L}^2} \right\rbrace\leq C\epsilon^{2\codim \Delta_i}.$$
Here, $\mu$ is the Gaussian probability measure on $H_i$ induced by the Fubini--Study metric on $\mathcal{O}(1)$, see Example \ref{example}. 
 \end{proposition}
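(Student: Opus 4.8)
The plan is to reduce the statement, uniformly in $i\in\{1,2,3\}$, to a tube-volume estimate for the discriminant and then invoke a general bound on the volume of neighborhoods of algebraic sets. The starting point is that $\Delta_i\subset H_i$ is a complex algebraic cone: it is cut out by polynomial (resultant-type) conditions on the coefficients of a section, and it is invariant under $s\mapsto\lambda s$ for every $\lambda\in\C^*$, since rescaling a section changes neither its zero locus nor the transversality of its vanishing. Its complex codimension is $c_i:=\codim_\C\Delta_i$, which is controlled by Lemma~\ref{codim}.

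First I would exploit the cone structure to remove the normalization by $\|s\|$. Because $\Delta_i$ is a $\C^*$-cone, the distance is homogeneous, $\dist(\lambda s,\Delta_i)=|\lambda|\,\dist(s,\Delta_i)$, while $\|\lambda s\|=|\lambda|\,\|s\|$; hence the set
$$ A_\epsilon^i := \{s\in H_i : \dist(s,\Delta_i)\le \epsilon\,\|s\|_{\mathscr L^2}\} $$
is invariant under the full $\C^*$-action. Writing the Gaussian measure $\mu$ in polar coordinates $s=\rho u$ with $\rho=\|s\|_{\mathscr L^2}$ and $u$ on the unit sphere $\mathbb S(H_i)$, the radial factor integrates to $1$ and one gets $\mu(A_\epsilon^i)=\sigma(A_\epsilon^i\cap\mathbb S(H_i))$, where $\sigma$ is the uniform probability measure on $\mathbb S(H_i)$. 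By homogeneity of the distance, $A_\epsilon^i\cap\mathbb S(H_i)=\{u\in\mathbb S(H_i):\dist(u,\Delta_i)\le\epsilon\}$.

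The problem is thereby reduced to bounding the spherical volume of the $\epsilon$-neighborhood of $\Delta_i\cap\mathbb S(H_i)$. Since a point of $\Delta_i$ at ambient distance $\le\epsilon$ from a unit vector $u$ has norm in $[1-\epsilon,1+\epsilon]$ and can be radially projected back onto $\Delta_i\cap\mathbb S(H_i)$ by moving a distance $O(\epsilon)$, this neighborhood lies, for $\epsilon$ small, in the intrinsic $(C\epsilon)$-tube of the real-algebraic set $V_i:=\Delta_i\cap\mathbb S(H_i)$, which has real codimension $2c_i$ in $\mathbb S(H_i)$. The key step is then the classical tube estimate: the volume of the $\delta$-neighborhood of a fixed real-algebraic (indeed subanalytic) subset of real codimension $k$ in a compact Riemannian manifold is $O(\delta^{k})$, with a constant depending only on the set. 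Applying this with $\delta=C\epsilon$ and $k=2c_i$ yields $\sigma(A_\epsilon^i\cap\mathbb S(H_i))\le C'\epsilon^{2c_i}$, the desired bound; the constant is uniform in $i$ because there are only three fixed varieties $\Delta_i$, depending only on $(n,r)$.

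The main obstacle is the tube estimate itself for the \emph{singular} set $V_i$: the discriminant is not smooth, so Weyl's tube formula does not apply directly. I would handle this by a Whitney stratification $V_i=\bigsqcup_\alpha W_\alpha$ into finitely many smooth locally closed strata, each of real codimension $\ge 2c_i$. The $\epsilon$-tube of $V_i$ is covered by the tubes of the strata, and on each stratum a Weyl-type estimate—applied to a relatively compact exhaustion and controlled near lower-dimensional strata by induction on dimension—contributes $O(\epsilon^{2c_\alpha})\le O(\epsilon^{2c_i})$ for $\epsilon\le 1$; summing over the finitely many strata gives the claim. Alternatively, and more cleanly, one may quote a ready-made bound on the volume of neighborhoods of projective varieties of prescribed degree and codimension, obtained via the integral-geometric Crofton formula in $\mathbb P(H_i)$, which bypasses the stratification bookkeeping while keeping the constant independent of $\epsilon$. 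I note that Lemma~\ref{dist} provides an explicit expression for $\dist(s,\Delta_i)$ in terms of $\min_X\|s(X)\|_{\mathrm{FS}}$, which could be used to make the distance computations fully explicit, but the conceptual core remains the tube estimate above.
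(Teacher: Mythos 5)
Your proposal is correct and follows essentially the same route as the paper: the paper likewise uses the cone structure of $\Delta_i$ and the rotation invariance of the Gaussian measure to reduce the statement to the spherical tube estimate $\nu\{u\in\mathbb S H_i:\dist(u,\mathbb S\Delta_i)\leq\epsilon\}\leq C'\epsilon^{2\codim\Delta_i}$, and then concludes by citing a ready-made volume-of-tubes bound for singular algebraic subsets (\cite[Theorem 1.3]{basu2023hausdorff}), which is exactly the estimate you sketch via Whitney stratification with Weyl-type bounds, or alternatively via Crofton. Your version simply spells out the polar-coordinate reduction and the radial reprojection onto the sphere, and substitutes a proof sketch of the tube estimate where the paper invokes the reference.
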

\begin{proof} The estimate in the statement is equivalent to the existence of a constant $C'>0$ such that
\begin{equation}\label{sfera} \forall i\in \{1,2,3\}, \ \nu \left\lbrace
	s\in  \mathbb S H_i, \ \dist(s,\mathbb S\Delta_i)
	\leq \epsilon \right\rbrace\leq C'\epsilon^{2\codim \Delta_i}
	\end{equation}
	for any $\epsilon$ small enough, where $\mathbb S H_i$ denotes the unit sphere in $H_i$, $S\Delta_i:=\Delta_i\cap \mathbb S H_i$,  and $ \nu$ is the uniform probability measure on $\mathbb S H_i$.
Now, the discriminant $\mathbb S\Delta_i$ is a complex algebraic subset of $\mathbb S H_i$ and its codimension in $\mathbb S H_i$ equals the codimension of $\Delta_i$ in $H_i$. Thus,  by applying~\cite[Theorem 1.3]{basu2023hausdorff}, one obtains \eqref{sfera}. Hence, the result follows.
	\end{proof}

\section{A formula for the holomorphic bisectional curvature }
The main result of this section is Proposition \ref{proposition2}, which provides a useful formula for the holomorphic bisectional curvature of a complex submanifold $Z(s)$ in terms of $s$. 

We begin by recalling some classical Riemannian facts. 
Let $Z$ be a submanifold of the Riemannian manifold $(M,g)$, $x\in Z$ and let
\begin{eqnarray*}
	\sigma : T_x Z\times T_x Z &\to& N_x Z\\
	(X,Y)& \mapsto & (\nabla_X Y)^\perp
\end{eqnarray*}
where $NZ\subset TM$ denotes the normal bundle of $Z$, and $\nabla$  the Levi--Civita connection associated to $g$.
The Gauss--Codazzi's equations~\cite[Theorem 3.6.2]{jost2008riemannian} allow us to compute the Riemann curvature $R^Z$ of $(Z,g_{|Z})$ in terms of $R^M$ and $\sigma$:
\begin{eqnarray}\label{gauss}
	\ \forall X,Y,V,W\in T_xZ,\ 
	\langle R^Z(X,Y)V,W\rangle&=& \langle R^M(X,Y)V,W\rangle  +
	\\ \nonumber &&\langle \sigma(Y,V),\sigma(X,W)\rangle- \langle \sigma(X,V),\sigma(Y,W)\rangle.
\end{eqnarray}
Let $E\to M$ be a smooth real vector bundle equipped with a metric $h$ and a metric connection $\nabla$.
Recall that the second covariant derivative $\nabla^2$ is defined by
\begin{equation}\label{covariant}
\forall x\in M, \ \forall V,W\in T_xM, \ \nabla^2_{VW} = \nabla_V\nabla_W-\nabla_{\nabla^{\mathrm{LC}}_V W}.
\end{equation}
The following proposition computes the curvature of the zero locus of a transverse section of $E$ in terms of its derivatives. 
\begin{proposition}[\cite{AG}]\label{sig}Let $n\geq 1$ and $1\leq r\leq n$ be integers, $(M,g)$ be a smooth Riemannian manifold of dimension $n$, $E\to M$ be a rank $r$ smooth real vector bundle equipped with a metric $h$ and a metric connection $\nabla$. Let $s\in \mathscr{C}^\infty(M,E)$ be a smooth section vanishing transversally on $Z(s)\subset M$. Then,
	for any $x\in Z(s)$,	
	\begin{equation}\label{uranus}
	\langle \sigma(V,W), \sigma(X,Y)\rangle_g  =
	( \nabla s G \nabla s^*)^{-1}(\nabla^2_{V,W}s) (\nabla^2_{X,Y} s)
	\end{equation}
	where $G:T^*M\rightarrow TM$ is the isomorphism given by $g(G(\alpha),\cdot)=\alpha$, for any $\alpha\in T^*M$.
	\end{proposition}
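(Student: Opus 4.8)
The plan is to reduce the statement to a pointwise identity relating the second covariant derivative $\nabla^2 s$, restricted to $T_xZ(s)\times T_xZ(s)$, to the second fundamental form $\sigma$, and then to convert the $g$-inner product of two values of $\sigma$ into the bilinear form $(\nabla s\, G\, \nabla s^*)^{-1}$ by a purely linear-algebraic manipulation of adjoints. The crucial observation is that, since $s$ vanishes transversally, the intrinsic derivative $A:=\nabla_x s\colon T_xM\to E_x$ has kernel exactly $T_xZ(s)$ and restricts to a linear isomorphism $A_N\colon N_xZ(s)\to E_x$; this is what allows one to invert and recover $\sigma$ from $\nabla^2 s$.

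First I would establish the pointwise identity
$$\nabla^2_{X,Y}s(x) = -\,\nabla_x s\bigl(\sigma(X,Y)\bigr), \qquad X,Y\in T_xZ(s).$$
Since $\nabla^2_{V,W}s$ is $\mathscr{C}^\infty$-linear in $V$ and $W$, I may compute it using any extensions of $X,Y$; I choose vector fields tangent to $Z(s)$ along $Z(s)$. With this choice, $\nabla_Y s$ vanishes identically on $Z(s)$: at every $p\in Z(s)$ one has $s(p)=0$, so $\nabla_Y s(p)=\nabla_p s\,(Y(p))$ equals the intrinsic derivative applied to $Y(p)\in T_pZ(s)=\ker\nabla_p s$, hence is zero. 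Consequently the section $\nabla_Y s$ vanishes on $Z(s)$, and differentiating it at $x$ in the tangent direction $X$ gives $\nabla_X\nabla_Y s(x)=0$. For the remaining term in $\nabla^2_{X,Y}s=\nabla_X\nabla_Y s-\nabla_{\nabla^{\mathrm{LC}}_X Y}s$, the vanishing $s(x)=0$ turns $\nabla_{\nabla^{\mathrm{LC}}_X Y}s(x)$ into $\nabla_x s(\nabla^{\mathrm{LC}}_X Y)$, and since $\nabla_x s$ kills $T_xZ(s)$ only the normal part $(\nabla^{\mathrm{LC}}_X Y)^\perp=\sigma(X,Y)$ survives. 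Subtracting yields the displayed identity.

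Next, using transversality to invert $A_N$, the identity reads $\sigma(X,Y)=-A_N^{-1}(\nabla^2_{X,Y}s)$, and likewise for $\sigma(V,W)$; the two signs cancel in the inner product. It then remains to compute $\langle A_N^{-1}\alpha,A_N^{-1}\beta\rangle_g$ for $\alpha,\beta\in E_x$. Writing $A^*$ for the adjoint of $A$ with respect to $g$ and $h$, one checks that $\mathrm{im}\,A^*\subseteq N_xZ(s)$ (because $A$ annihilates $T_xZ(s)$), so $AA^*=A_NA_N^*$ and $A_N^{-1}=A_N^*(A_NA_N^*)^{-1}$. Substituting and using the defining property of the adjoint gives
$$\langle A_N^{-1}\alpha,A_N^{-1}\beta\rangle_g=\bigl\langle (AA^*)^{-1}\alpha,\beta\bigr\rangle_h.$$
Finally I would identify the Gram operator $AA^*=\nabla_x s\circ(\nabla_x s)^*$ with $\nabla s\,G\,\nabla s^*$, where $G$ encodes the passage $T^*M\to TM$ between $g$ and its dual; under the metric identifications $E_x\cong E_x^*$ and $T_xM\cong T_x^*M$ this is exactly the operator in the statement, and the last display becomes the claimed formula.

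The main obstacle I expect is the pointwise identity of the second paragraph: one must be scrupulous that $\nabla^2 s$ is genuinely tensorial (so that the convenient $Z(s)$-tangent extensions are admissible) and that $\nabla_Y s$ vanishes along the whole submanifold rather than merely at $x$, since it is this stronger vanishing that forces $\nabla_X\nabla_Y s(x)=0$. The final adjoint computation is routine once the conventions for $\nabla s^*$ and $G$ are pinned down.
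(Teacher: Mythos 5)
Your proof is correct, and since the paper itself states Proposition~\ref{sig} without proof (citing \cite{AG}), there is nothing in this paper to diverge from: your argument is the standard one used in that reference. The two key steps --- the pointwise identity $\nabla^2_{X,Y}s(x)=-\nabla_x s(\sigma(X,Y))$ obtained via $Z(s)$-tangent extensions (with the tensoriality of $\nabla^2$ and the vanishing of $\nabla_Y s$ along all of $Z(s)$ correctly justified), and the Gram-operator computation $\langle A_N^{-1}\alpha,A_N^{-1}\beta\rangle_g=\langle (AA^*)^{-1}\alpha,\beta\rangle_h$ identified with $(\nabla s\, G\, \nabla s^*)^{-1}(\alpha)(\beta)$ under the metric identification $E_x\cong E_x^*$ --- are both sound and complete.
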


The following proposition computes the holomorphic bisectional  curvature of a complex submanifold of a K\"ahler manifold, in terms of a (not necessarily holomorphic) section $s$  of $E$ that vanishes along it. 
\begin{proposition}\label{proposition2}
Let $(M,g,J)$ be a K\"ahler manifold, $E\to M$ be a complex vector bundle of rank $r$, and $s\in \mathscr{C}^\infty(M,E)$ vanishing transversely along a $J$-complex submanifold $Z(s)$. Then,  
for any $x\in Z(s)$ and any pair of unit vectors $X,Y\in T_xZ(s)$,
\begin{eqnarray*}
	\hbc_{Z(s)} (X,Y)&=&  \hbc_{M} (X,Y)
	-2
	(\nabla^E s G (\nabla^E s)^*)^{-1}((\nabla^{E})^2_{X,Y}s)( (\nabla^{E})^2_{X,Y}s),
\end{eqnarray*}
where everything is computed at $x$, and where $G:T^*M\rightarrow TM$ is the isomorphism given by $g(G(\alpha),\cdot)=\alpha$, for any $ \alpha\in T^*M$.
\end{proposition}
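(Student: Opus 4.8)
The plan is to feed the Gauss equation~\eqref{gauss} into Proposition~\ref{sig}, the only genuinely geometric input being the way the second fundamental form $\sigma$ interacts with the complex structure $J$. Since $X,Y$ are unit vectors and $Z(s)$ is $J$-complex (so that $JX,JY\in T_xZ(s)$), the definition of the holomorphic bisectional curvature gives $\hbc_{Z(s)}(X,Y)=R^{Z(s)}(X,JX,Y,JY)=\langle R^{Z(s)}(X,JX)JY,Y\rangle$, the bracket ordering being the one consistent with the decomposition recalled in the introduction. Applying~\eqref{gauss} with the entries $(X,JX,JY,Y)$ then yields
\begin{equation*}
\hbc_{Z(s)}(X,Y)=\hbc_M(X,Y)+\langle\sigma(JX,JY),\sigma(X,Y)\rangle-\langle\sigma(X,JY),\sigma(JX,Y)\rangle,
\end{equation*}
the ambient term being exactly $\langle R^M(X,JX)JY,Y\rangle=\hbc_M(X,Y)$.

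The heart of the argument is the simplification of the two remaining brackets. Because $M$ is K\"ahler we have $\nabla J=0$, and because $Z(s)$ is a complex submanifold both $T_xZ(s)$ and its $g$-orthogonal complement $N_xZ(s)$ are $J$-invariant; in particular $J$ commutes with the orthogonal projection onto $N_xZ(s)$. Extending $Y$ to a vector field tangent to $Z(s)$, so that $JY$ is tangent as well, the identity $\nabla_X(JY)=(\nabla_XJ)Y+J\nabla_XY=J\nabla_XY$ gives
\begin{equation*}
\sigma(X,JY)=(\nabla_X(JY))^\perp=J(\nabla_XY)^\perp=J\sigma(X,Y),
\end{equation*}
and by symmetry of $\sigma$ also $\sigma(JX,Y)=J\sigma(X,Y)$, whence $\sigma(JX,JY)=J\sigma(X,JY)=-\sigma(X,Y)$. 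Since $J$ is a $g$-isometry, the first bracket equals $-\|\sigma(X,Y)\|_g^2$ and the second $+\|\sigma(X,Y)\|_g^2$, so that $\hbc_{Z(s)}(X,Y)=\hbc_M(X,Y)-2\|\sigma(X,Y)\|_g^2$.

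It then remains to apply Proposition~\ref{sig}. Viewing $E$ as a real metric bundle of rank $2r$ with the metric connection $\nabla^E$, the section $s$ still vanishes transversely along $Z(s)$ (complex transversality implies real transversality), so formula~\eqref{uranus} with $V=X$, $W=Y$ reads
\begin{equation*}
\|\sigma(X,Y)\|_g^2=(\nabla^E sG(\nabla^E s)^*)^{-1}((\nabla^E)^2_{X,Y}s)((\nabla^E)^2_{X,Y}s);
\end{equation*}
substituting this into the previous identity gives the claimed formula. I expect the middle paragraph to be the main obstacle: one must establish the $J$-compatibility relations for $\sigma$ and, in tandem, keep scrupulous track of the index and sign conventions for the curvature tensor—it is the combination of these conventions with the K\"ahler identity $\nabla J=0$ that produces the correct coefficient $-2$ rather than a naive $+2$—while the remaining steps are direct substitutions.
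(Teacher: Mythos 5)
Your proof is correct and takes essentially the same route as the paper's: the $J$-compatibility relations $\sigma(X,JY)=\sigma(JX,Y)=J\sigma(X,Y)$ and $\sigma(JX,JY)=-\sigma(X,Y)$ (the paper's equations~\eqref{junk}, derived from $\nabla J=0$ and the $J$-invariance of the normal space) fed into the Gauss equation~\eqref{gauss} to produce the correction $-2\|\sigma(X,Y)\|_g^2$, followed by Proposition~\ref{sig} to rewrite $\|\sigma(X,Y)\|_g^2$ in terms of $s$. If anything, you make explicit the slot bookkeeping in the Gauss equation and the sign-convention point that the paper leaves implicit.
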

\begin{proof}
	Since $(M,J,g)$ is K\"ahler, $\nabla J=0$, so that for any $i\in \{1, \cdots, k\}$, using the symmetry of $\sigma$ and the fact that the normal subspace $(T_xZ(s))^\perp\subset T_xM$ is invariant under $J$,
for all $i\in \{1,\cdots, n-r\},$ for all $X\in T_xZ(s),$
\begin{equation}\label{junk}
 (\nabla^{\mathrm{LC}}_{Je_i}(Je_i))^\perp = -(\nabla^{\mathrm{LC}}_{e_i} e_i)^\perp \text{ and }
		(\nabla^{\mathrm{LC}}_{Je_i}(X))^\perp = J(\nabla^{\mathrm{LC}}_{e_i} X )^\perp, 
\end{equation}
where $\perp$ denotes the orthogonal projection onto $(T_xZ(s))^\perp$.
		By the Gauss equations, 
$$
\forall X,Y,V,W\in T_xZ(s),\ 
			\langle R^{Z(s)}(X,Y)V,W\rangle_g= \langle R^M(X,Y)V,W\rangle_g  +
			\langle \sigma(Y,V) , \sigma(X,W)\rangle - 	\langle \sigma(X,V), \sigma(Y,W) \rangle,
$$
		where $R^{Z(s)}$ (resp. $R^M$) denotes the Riemannian curvature of $g_{|Z(s)}$ on $Z(s)$ (resp. $g$ on $M$).
			By~(\ref{junk}) and~(\ref{gauss}), this implies
			\begin{eqnarray*}
\hbc_{Z(s)} (X,Y)&=&  \hbc_{M} (X,Y)
				-2
			\langle \sigma(X,Y),\sigma (X,Y)\rangle,
			\end{eqnarray*} 
		where we used that $\sigma$ is symmetric. 
						By Proposition~\ref{sig}, 
		for any tangent vector $X,Y,V,W$ in $T_xZ(s),$
		\begin{eqnarray*}
		\langle \sigma(Y,V),\sigma(X,W)\rangle = 
			( \nabla s G \nabla s^*)^{-1}(\nabla^2_{Y,V}s) (\nabla^2_{X,W} s),
		\end{eqnarray*}		hence the result. 
	\end{proof}

Let $(T,g)$ and $(E,h)$ be finite dimensional vector spaces equipped with metrics $g$ and $h$. Let 
$ G \in L( T^*,T)$ be defined by $$\forall \alpha\in T^*, \langle G\alpha, \cdot \rangle_g = \alpha.$$ 
The following lemma will allow us to bound the norm of the inverse from above in the latter Proposition~\ref{proposition2}.
\begin{lemma}\label{lemmaT}
	Let $E,T$ as above and  $f\in L(T,E)$. Then, 
	\begin{itemize}
		\item $\forall \alpha\in E^*,\ 
		\alpha fGf^*\alpha\geq 0. $
		\item 
		If $f$ is onto, then, $	f  G f^*$ is inversible and 
		$$\forall v\in E, \ 
		(f  G f^*)^{-1}(v)(v)\geq \|f\|^{-2}\|v\|^2.$$
	\end{itemize}
\end{lemma}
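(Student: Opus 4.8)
The plan is to prove the two assertions of Lemma~\ref{lemmaT} separately, treating the nonnegativity first and then the lower bound on the inverse. The whole statement rests on the fact that $G$ is the ``metric-raising'' isomorphism, so $G$ is itself positive and self-adjoint in the appropriate sense, and $fGf^*$ is a Gram-type operator on $E$.

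\begin{proof}
First observe that $G\colon T^*\to T$ is defined by $\langle G\alpha,\cdot\rangle_g=\alpha$, hence for any $\alpha,\beta\in T^*$ we have $\beta(G\alpha)=\langle G\alpha,G\beta\rangle_g$; in particular $G$ is self-adjoint and positive-definite with respect to the pairing between $T$ and $T^*$. Consequently, for any $\alpha\in E^*$,
\begin{equation*}
\alpha\bigl(fGf^*\alpha\bigr)=\bigl(f^*\alpha\bigr)\bigl(G(f^*\alpha)\bigr)=\|G(f^*\alpha)\|_g^2\geq 0,
\end{equation*}
where we set $\gamma:=f^*\alpha\in T^*$ and used $\gamma(G\gamma)=\langle G\gamma,G\gamma\rangle_g\geq 0$. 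This proves the first item.

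Now suppose $f$ is onto. Then $f^*\colon E^*\to T^*$ is injective, so $\alpha(fGf^*\alpha)=\|G(f^*\alpha)\|_g^2>0$ whenever $\alpha\neq 0$; hence the self-adjoint nonnegative operator $fGf^*$ is in fact positive-definite, and therefore invertible. It remains to bound $(fGf^*)^{-1}(v)(v)$ from below for $v\in E$. Identify $v$ with the covector $\langle v,\cdot\rangle_h\in E^*$ via the metric $h$, write $A:=fGf^*\in L(E^*,E)$ for the (invertible, self-adjoint, positive) operator, and set $w:=A^{-1}v\in E^*$, so that the quantity to estimate is $A^{-1}(v)(v)=v(w)$ once we read $v\in E^*$ through $h$. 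The natural route is the variational characterization: since $A$ is positive-definite, by Cauchy--Schwarz in the inner product $\langle\alpha,\beta\rangle_A:=\alpha(A\beta)$ one has $v(w)=\langle w,w\rangle_A\geq \frac{\lvert\langle w,\alpha\rangle_A\rvert^2}{\langle\alpha,\alpha\rangle_A}$ for any test covector $\alpha$. Choosing $\alpha=v$ gives
\begin{equation*}
A^{-1}(v)(v)=\langle w,w\rangle_A\geq \frac{\lvert v(Aw)\rvert^2}{v(Av)}=\frac{\lvert v(v)\rvert^2}{v(fGf^*v)}=\frac{\|v\|_h^4}{\|G(f^*v)\|_g^2},
\end{equation*}
where $v(v)=\|v\|_h^2$ under the identification. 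Finally I would estimate the denominator: $\|G(f^*v)\|_g^2=(f^*v)(G f^*v)$, and since $G$ is the metric inverse the norm of $G$ as an operator $T^*\to T$ is $1$, so $\|G(f^*v)\|_g\leq \|f^*v\|_{g^*}=\|f^*\|\,\|v\|_h=\|f\|\,\|v\|_h$, using $\|f^*\|=\|f\|$. Substituting yields $A^{-1}(v)(v)\geq \|v\|_h^4/(\|f\|^2\|v\|_h^2)=\|f\|^{-2}\|v\|_h^2$, which is the claim.
\end{proof}

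The step I expect to be the only genuinely delicate point is the lower bound on the inverse: one must resist the temptation to diagonalize and instead use a metric-free argument (Cauchy--Schwarz with the test vector equal to $v$ itself), so that the operator norm $\|f\|$ enters through the clean estimate $\|G f^*v\|_g\leq\|f\|\,\|v\|_h$ rather than through eigenvalue bookkeeping. Everything else is a direct unwinding of the definition of $G$ and the standard identities $\|f^*\|=\|f\|$ and $\|G\|_{op}=1$.
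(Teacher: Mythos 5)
Your proof is correct, and your first bullet is essentially the paper's argument: both reduce $\alpha(fGf^*\alpha)$ to a Gram quantity via $\beta(fGf^*\alpha)=\langle Gf^*\beta,Gf^*\alpha\rangle_g=\langle f^*\beta,f^*\alpha\rangle_{g^*}$, so your $\|G(f^*\alpha)\|_g^2$ is exactly the paper's $\|f^*\alpha\|_{g^*}^2$. Where you diverge is the second bullet. The paper does precisely what you resolved to avoid: it fixes an orthonormal basis $B^*$ of $E^*$, writes $Q=\mat(fGf^*,B^*)$, observes $(fGf^*)^{-1}(v)(v)={}^tVQ^{-1}V\geq(\max\spec Q)^{-1}\|v\|^2$, and bounds $\max\spec Q\leq\|f\|^2$ using the first bullet's inequality $\alpha(fGf^*\alpha)=\|f^*\alpha\|_{g^*}^2\leq\|f^*\|^2\|\alpha\|^2$. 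You instead run Cauchy--Schwarz in the inner product $\langle\alpha,\beta\rangle_A=\alpha(A\beta)$ with test covector $\alpha=\langle v,\cdot\rangle_h$, obtaining $A^{-1}(v)(v)\geq\|v\|_h^4/\bigl(v(Av)\bigr)$ and then the same bound $v(Av)\leq\|f\|^2\|v\|_h^2$. The two mechanisms are equivalent in substance — your variational bound $\;{}^tVQ^{-1}V\geq({}^tVV)^2/({}^tVQV)$ is in general sharper than the spectral bound $\lambda_{\max}(Q)^{-1}\|V\|^2$, but both are finished with the same estimate \eqref{boss}, so the final constants coincide. Your route buys coordinate-freeness and avoids invoking the spectrum; the paper's buys a two-line matrix computation. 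Two cosmetic points: your claim $\|G\|_{op}=1$ understates the fact that $G\colon(T^*,g^*)\to(T,g)$ is an isometry, so your estimate $\|G(f^*v)\|_g\leq\|f^*v\|_{g^*}$ is actually an equality; and the quantity $(fGf^*)^{-1}(v)(v)$ is the natural pairing $w(v)$ with $w=A^{-1}v\in E^*$, needing no metric identification of $v$ — the metric on $E$ enters only in your choice of test covector, which is worth stating cleanly since your phrase ``read $v\in E^*$ through $h$'' briefly conflates the two roles of $v$. Neither point affects correctness.
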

\begin{proof}
	%
	For any $\alpha\in E^*$, $w\in T$, $f\in L(T,E)$, 
	$$ \langle Gf^*(\alpha), w\rangle_g = f^*(\alpha)(w)= \alpha(f(w)), $$
	so that for any $\beta \in E^*$, $$\beta(f(Gf^*(\alpha))) = f^*(\beta)(Gf^*(\alpha))= \langle Gf^*(\beta),Gf^*(\alpha)\rangle_g= \langle f^*(\beta),f^*(\alpha)\rangle_{g^*},
	$$
	where the scalar product on $T^*$ is defined by
	$$ \forall v,w\in T^*, \langle v,w\rangle_{g^*}= \langle Gv,Gw\rangle_g.$$
	In particular, 
	\begin{equation}\label{boss}
	0\leq \alpha(f  G f^*(\alpha))= \|f^*\alpha\|^2_{g^*}\leq \|f^*\|^2 \|\alpha\|^2.
	\end{equation}
	This proves the first point.
	Now, if $f$ is onto, then $fGf^*$ is inversible. Let
	$$ Q= \mat(fGf^*,B^*),$$ be the matrix of $fGf^*$ seen as a bilinear form on $E^*$, and where $B^*$ is an orthonormal basis $B^*$ of $E^*$. Let $B$ the orthonormal dual basis $B$ of $E$. Then,
	$$ \forall v\in E, 
	\ (fGf^*)^{-1}(v)(v)= {}^tV Q^{-1} V,$$
	where $V$ denotes the coordinate vector of $v$ in $B$. Hence,
	$$ \forall v\in E, 
	\ |(fGf^*)^{-1}(v)(v)|\geq (\max \spec Q)^{-1} \|v\|^2\geq \|f\|^{-2}\|v\|^2, $$
	where we used~(\ref{boss}) for the last inequality.
\end{proof}

\section{Bergman kernel and the Bargmann--Fock field}
In this section, we recall the relation between the Bergman kernel associated with $E\otimes L^d$ and the Bargmann--Fock field.
\subsection{The Bargmann--Fock field }

Recall that the	Bargmann--Fock field is defined
by 
\begin{equation}\label{bafo}
\forall z\in \C^n, \ f(z)=\sum_{(i_1,\cdots, i_n)\in \Nn^n}a_{i_0, \cdots, i_n}
{\sqrt{\frac{\pi^{i_1+\cdots +i_n}}{i_1!\cdots i_n!}}}{z_1^{i_1}\cdots z_n^{i_n}}e^{-\frac12 \pi\|z\|^2},
\end{equation}
where the $\sqrt 2 a_I$'s are independent normal complex Gaussian random variables. The associated covariant function equals
\begin{equation}\label{mp}
\forall z,w\in \C^n, \ \mathcal P(z,w):= \mathbb E (f(z)\overline{f(w)})= \exp\left(
-\frac{\pi}2 (\|z\|^2 +\|w\|^2 -2\langle z,w\rangle_{\C^n})
\right).
\end{equation}
The a priori superfluous presence of $\pi$ is in fact consistent with the projective situation. Indeed, the affine Bargmann--Fock is 
the universal local limit of the projective model, see Theorem~\ref{Dai}.
To unify the setting of this section with that of the next section and the rest of the paper,
we consider here that $M=\C^n$ and $L=\C^n \times \C$ with its standard Hermitian metric. 
Then $\mathcal P (z,w)\in L_z\otimes L_w^*.$ Let $\nabla_0$ be the metric connection on $L$ defined by
\begin{equation}\label{nabla}
\nabla_0 1 = \frac12 \pi (\bar \partial -\partial) \|z\|^2,
\end{equation}
whereas the dual connection $\nabla_0^*$ on $L^*$
satisfies $
\nabla_0^* 1^* = -\frac12 \pi (\bar \partial -\partial) \|z\|^2,$
where $1^*$ is the dual of $1$. The reason why we fix the connection $\nabla_0$ is that  the curvature of the trivial connection on $L$ is $0$, while, as we will see below, the curvature of $\nabla_0 $ is positive. 

Note that the constant section $1$ is no longer a holomorphic section for the connection $\nabla$, but 
the section $\sigma_0:= \exp(-\frac12 \pi \|z\|^2)$ is. 
The connection $\nabla_0$ is then the Chern connection for the trivial metric and this holomorphic structure. 
This implies that the section $\mathcal P$ is holomorphic in $z$, and antiholomorphic in $w$. 
Moreover, the
curvature of $\nabla_0$ equals
$$ \mathcal R_0 = \bar\partial \partial \log \|\sigma_0\|^2 = \pi \partial \bar \partial \|z\|^2,$$
and the
curvature form equals
$\frac{i}{2\pi}\mathcal R_0=\frac{i}2 \sum_{i=1}^n \mathrm{d}z_i \wedge \overline{\mathrm{d}z_i}
$
which is the standard symplectic form $\omega_0$ over $\R^{2n}$. 
Moreover, for any $\mathscr C^2$ function $f : B(0,\epsilon)\to \C$,
\begin{eqnarray*} \forall v,w\in \C^n, (\nabla_0)^2_{vw}f(0) 
	& = & 
D^2f(v,w)+	\frac{1}2\pi\sum_{i}(v_i\bar{w_i}-w_i\bar{v_i})f\\
	&=& D^2f(v,w)+\frac{1}2\mathcal R_0(v,w)f.
\end{eqnarray*}
Notice that $\mathcal R_0(v,w) = (\nabla_0)^2_{vw}-(\nabla_0)^2_{wv}$.
Finally, a direct computation shows
\beqe 
\nabla_0^2\nabla_0^{*2} \mathcal P (0,0) &=& 
\pi^2 \sum_{i,j,k,\ell=1}^n(\delta_{ik}\delta_{j\ell}+\delta_{i\ell}\delta_{jk}) \mathrm{d}z_i \otimes \mathrm{d}z_j\otimes \overline{\mathrm{d}w_k}\otimes \overline{\mathrm{d}w_\ell},
\eeqe
see for example ~\cite[Lemma 4.2]{bettigayet}.

\begin{remark} Almost surely the  Bargmann--Fock Gaussian field $f$ is a holomorphic section
for the standard complex structure and the connection  defined by~(\ref{nabla}).
\end{remark}
Let $E=\C^n \times \C^r$ endowed with its trivial metric and 
let $f=(f_i)_{i=1, \cdots, r}$ be $r$ 
independent copies of the Bargmann--Fock field.  Then,
$f$ is a random section of $E\otimes L$,
and its covariance function equals $\mathcal P \text{Id}_{\C^r}.$
We use the connection $(\nabla_0)^r$ (the $r$-product of $\nabla_0$) acting on sections of $E\otimes L$. By an abuse of notation, we continue to use $\nabla_0$ for $(\nabla_0)^r$. 

Let 
$\Sigma_{\mathrm{GOE}}$ be the variance matrix defined by:
\beq\label{GOE}
\Sigma_{\mathrm{GOE}} =  \left(\delta_{ik}\delta_{j\ell}+\delta_{i\ell}\delta_{jk}\right)_{\substack{1\leq i\leq j\leq n\\
		1\leq k\leq l\leq n}}\in M_{\frac{n(n+1)}2}(\C).
\eeq

\begin{remark} A complex bilinear form $T\in \sym_\C (\C^n,\C)$ writes
$$ T= \sum_{1\leq i,j\leq n}T_{ij}\mathrm{d}z_i\otimes \mathrm{d}z_j,$$
with $T_{ij}\in \C$ for any $1\leq i,j\leq n$. Now, if $T$ is a random Gaussian complex bilinear form and satisfies $\cov(T)= \Sigma_\mathrm{GOE}$, 
then the $T_{ij}$ are independent if $i\leq j$, 
$ \var (T_{ij})= 1$ is $i<j$ and $\var(T_{ii})=2.$ Equivalently, the Gaussian measure on $\sym_\C (\C^n,\C)$ equals
\begin{equation}\label{meumeu}
 e^{-\frac{1}{4} \tr(TT^*)} \frac{\prod_{1\leq i\leq j\leq n}\mathrm{d}T_{ij}}{(2\pi)^{\frac{n(n+1)}{2}}2^n },
 \end{equation}
where $T$ is identified with the complex symmetric matrix $(T_{ij})_{1\leq i,j\leq n}.$
\end{remark}

\begin{proposition}\cite[Corollary 4.3]{bettigayet}\label{coaff} Let $f:\C^n \to \C^r$ be $r$ independent copies of the Bargmann--Fock field. 
	Then, 
	\begin{equation*}
	\cov (f(0), \nabla_0 f(0),\nabla_0^2f(0)) = \cov_{\mathrm{BF}},
	\end{equation*}
	where 
	\begin{equation}\label{covBF}
	\cov_{\mathrm{BF}}:=	 \begin{pmatrix}
	1 & 0 & 0\\
	0   & \pi \ \id_{\C^n}  & 0\\
	0&0& \pi^2\Sigma_\mathrm{GOE}
	\end{pmatrix} \otimes \ \id_{\C^r}.
	\end{equation}
\end{proposition}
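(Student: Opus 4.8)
The plan is to use the standard principle that, for a centered complex Gaussian field, the covariance of any two covariant derivatives is obtained by applying the corresponding differential operators to the two-point function $\mathcal{P}$ and then evaluating on the diagonal. Since $f$ is a centered Gaussian section of $E\otimes L$ with covariance $\mathcal{P}\,\id_{\C^r}$, one has
\[\cov(\nabla_0^a f(0), \nabla_0^b f(0)) = \big((\nabla_0)_z^a (\nabla_0^*)_w^b \,\mathcal{P}\big)(0,0)\,\id_{\C^r}\]
for $a,b\in\{0,1,2\}$, where $\nabla_0$ acts on the $z$-variable and the dual connection $\nabla_0^*$ on the $w$-variable. This reduces the statement to computing the nine blocks $\big((\nabla_0)_z^a(\nabla_0^*)_w^b\mathcal{P}\big)(0,0)$, the tensor factor $\id_{\C^r}$ being immediate from the independence of the $r$ copies.

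For the diagonal blocks, $a=b=0$ gives $\mathcal{P}(0,0)=1$; the block $a=b=2$ has already been computed in the preceding lines of the excerpt, namely $\nabla_0^2\nabla_0^{*2}\mathcal{P}(0,0)=\pi^2\sum_{i,j,k,\ell}(\delta_{ik}\delta_{j\ell}+\delta_{i\ell}\delta_{jk})\,dz_i\otimes dz_j\otimes\overline{dw_k}\otimes\overline{dw_\ell}$, which after identifying symmetric $2$-tensors with their coordinates indexed by $\{i\le j\}$ is exactly $\pi^2\Sigma_{\mathrm{GOE}}$. For $a=b=1$ I would compute $\nabla_{0,z}\nabla_{0,w}^*\mathcal{P}(0,0)$ directly: passing to the holomorphic frame $\sigma_0=\exp(-\frac12\pi\|z\|^2)$, in which $\mathcal{P}$ is represented by $\exp(\pi\langle z,w\rangle_{\C^n})$, and using that the connection $1$-form of $\nabla_0$ vanishes at the origin, this reduces to $\partial_{z_i}\partial_{\bar w_j}\exp(\pi\sum_k z_k\bar w_k)\big|_{0}=\pi\delta_{ij}$, so the block equals $\pi\,\id_{\C^n}$.

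The remaining point is the vanishing of all off-diagonal blocks. In the holomorphic frame, $\mathcal{P}$ is represented by $\exp(\pi\langle z,w\rangle_{\C^n})$, which is holomorphic in $z$ and antiholomorphic in $w$; since the connection and its $1$-form vanish at the origin, the mixed covariant derivatives $(\nabla_0)_z^a(\nabla_0^*)_w^b\mathcal{P}(0,0)$ reduce to ordinary derivatives $\partial_z^a\partial_{\bar w}^b\exp(\pi\sum_k z_k\bar w_k)\big|_0$, and expanding the exponential shows these vanish whenever $a\neq b$ by a degree count, as every monomial in the expansion carries equally many $z$'s and $\bar w$'s. Equivalently, and perhaps more transparently, one can read the vanishing off the series~\eqref{bafo}: evaluating at the origin shows that $f(0)$, $\nabla_0 f(0)$ and the symmetric part of $\nabla_0^2 f(0)$ equal, respectively, $a_0$, $\sqrt{\pi}\sum_i a_{e_i}\,dz_i$, and the Hessian $D^2 g(0)$ of the holomorphic part $g=\sum_I a_I\sqrt{\pi^{|I|}/I!}\,z^I$; these three quantities involve the independent Gaussian coefficients $a_I$ with $|I|=0,1,2$ respectively, hence are mutually independent and their cross-covariances vanish.

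The step I expect to require the most care is the second-order diagonal block, specifically matching normalizations. Using the formula $(\nabla_0)^2_{vw}f(0)=D^2f(v,w)+\frac12\mathcal{R}_0(v,w)f(0)$ recalled above, the curvature correction $\frac12\mathcal{R}_0$ is antisymmetric in $(v,w)$ and therefore affects only the antisymmetric part of $\nabla_0^2 f(0)$; since $\Sigma_{\mathrm{GOE}}$ records the covariance of the \emph{symmetric} part (the genuine Hessian, an element of $\sym_\C(n,1)$), the correction drops out and one is left with $D^2 g(0)$. It then remains only to check that the diagonal entries of the resulting covariance are $\pi^2$ for $i<j$ and $2\pi^2$ for $i=j$ — the factor $2$ arising from $\partial_i^2(z_i^2)=2$ together with the normalizing constant $\sqrt{\pi^2/2!}$ — which is precisely the content of $\pi^2\Sigma_{\mathrm{GOE}}$.
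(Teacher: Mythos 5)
Your primary route --- computing $\cov(\nabla_0^a f(0),\nabla_0^b f(0))$ as $\bigl((\nabla_0)_z^a(\nabla_0^*)_w^b\,\mathcal P\bigr)(0,0)\,\id_{\C^r}$ and evaluating the blocks --- is exactly the computation the paper relies on: it cites \cite[Corollary 4.3]{bettigayet} and displays the key block $\nabla_0^2\nabla_0^{*2}\mathcal P(0,0)$ just before the statement. Your normalizations (the block $\pi\,\id_{\C^n}$, and the entries $\pi^2$ for $i<j$, $2\pi^2$ for $i=j$ after indexing by $i\le j$) are all correct, as is the tensor factor $\id_{\C^r}$ from independence of the copies.

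However, the step you yourself flagged as delicate is justified incorrectly. The curvature term is \emph{not} the only discrepancy between $\nabla_0^2 f(0)$ and the holomorphic Hessian: writing $f=g\,e^{-\frac12\pi\|z\|^2}$ and using the paper's formula $(\nabla_0)^2_{vw}f(0)=D^2f(0)(v,w)+\frac12\mathcal R_0(v,w)f(0)$, the Euclidean Hessian $D^2f(0)$ contains, besides $D^2g(0)$, the Hessian of the Gaussian weight, namely $-\pi\,\Re\bigl(\sum_i \bar v_i w_i\bigr)f(0)$; combined with the curvature term this yields $(\nabla_0)^2_{vw}f(0)=D^2g(0)(v,w)-\pi\bigl(\sum_i\bar v_i w_i\bigr)f(0)$. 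The extra term is real-symmetric (of type $(1,1)$), so your claim that the corrections drop out of the symmetric part ``because $\frac12\mathcal R_0$ is antisymmetric'' fails: the real-symmetric part of $\nabla_0^2f(0)$ is not $D^2g(0)$, and its cross-covariance with $f(0)$ equals $-\pi\,\Re\bigl(\sum_i\bar v_i w_i\bigr)\neq 0$. The correct dichotomy is by type, not by symmetry: the proposition concerns the complex-bilinear $(2,0)$ part of $\nabla_0^2f(0)$ (the components on $\mathrm{d}z_i\otimes \mathrm{d}z_j$, i.e.\ the element of $\sym_\C(n,r)$, cf.\ Remark~\ref{remarkalmostsure}), on which both the curvature term and the mixed term vanish identically. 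For the same reason, your assertion that the mixed covariant derivatives of $\mathcal P$ ``reduce to ordinary derivatives because the connection $1$-form vanishes at the origin'' is valid only at first order or on the $(2,0)$ components: second covariant derivatives pick up the first derivative of the connection form, which is nonzero at $0$ (that is precisely the curvature correction). Once the restriction to the $(2,0)$ part is made explicit, your series argument --- that $f(0)$, $\nabla_0 f(0)$ and the $(2,0)$ part of $\nabla_0^2 f(0)$ involve the independent coefficients $a_I$ with $|I|=0,1,2$ respectively --- is a complete and correct proof, in line with the cited computation.
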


\begin{remark}\label{remarkalmostsure} Proposition \ref{coaff} implies, in particular, that 
almost surely $\nabla_0f(0)$ is a complex linear map, 
and that $\nabla^2_0f(x)\in \sym_\C(\C^n,\C^r)$, that is, $\nabla^2_0f(0)$ is a bilinear complex map with values in $\C^r$.
\end{remark}

\subsection{The Bergman kernel}\label{bergman}

In this paragraph we assume that the setting and hypotheses of Theorem~\ref{theorem1} are satisfied. The covariance function $K_d$ for the Gaussian field generated by the holomorphic sections $s\in H^0(M, E\otimes L^d)$ is defined by
$$ \forall z,w\in M, 
\ K_d (z,w) = \mathbb E \left[s(z)\otimes (s(w))^*\right]\in (E\otimes L^d)_{|z} \otimes (E\otimes L^{d})_{|w}^*,$$
where the averaging is made for the measure $\mu_d$ given by~(\ref{mesure}), where $L^*$ is the (complex) dual of $L$ and 
$$\forall w\in M, \ \forall s,t\in L^d_w, \  s^* (t) = \langle s,t\rangle_{h^d(w)}.$$
The covariance $K_d$ is the \emph{Bergman kernel}, that is the kernel of the orthogonal projector from $\mathscr{L^2}(M, L^d)$ onto $H^0(M, L^d)$. This fact can be seen through the equations
$$\forall z,w\in M, \ K_d(z,w) = \sum_{i=1}^{N_d} S_i (z)\otimes S_i^*(w),$$
where $(S_i)_i$ is an orthonormal basis of $H^0(M, L^d)$ for the Hermitian product~(\ref{prod}).
Recall that the metric $g$ is induced by the curvature form $\omega$  and the complex structure. 
It is now classical  that the Bergman kernel has a universal rescaled (at scale $\frac{1}{\sqrt d}$) limit, the Bargmann--Fock kernel $\mathcal P$, see~(\ref{mp}) above.
Theorem~\ref{Dai} below quantifies this phenomenon. For this, we need to introduce local trivializations and charts. 

Let $x\in M$ and $R>0$ be such that $2R$ is less than the radius of injectivity of $M$ at $x$. Then the exponential map based at $x$ induces a chart near $x$ with values in $B_{T_xM}(0,2R)$. We identify a point in $M$ with its coordinates. The parallel transport provides a trivialization 
$$\varphi_x : B_{T_xM}(0,2R)\times (E\otimes L)_{x}\to  (E\otimes L)_{|B_{T_xM}(0,2R) }$$ 
which induces a trivialization of 
$(E\otimes L^d\boxtimes
(E\otimes L^d)^*)_{|B_{T_xM}(0,2R)^2}$.
Under this trivialization, the Bergman kernel 
$ K_d$ becomes a map from $T_x M^2 $ with values into $\End\left(E\otimes L^d_x\right)$.

\begin{theorem}(\cite[Theorem 4.2.1]{ma2007holomorphic})\label{Dai} Under the hypotheses of Theorem~\ref{theorem1}, let $m\in \Nn$. Then, there exist $C>0$, such that for any $k\in\{0,\cdots, m\}, $ for any $x\in M$, 
	$\forall z,w\in B_{T_xM}(0,1), $ 
	\beqe \left\|	D^k_{(z,w)}\left(
	\frac{1}{d^n}K_d(\frac{z}{\sqrt d},\frac{w}{\sqrt d}) - \mathcal P(z,w)
	\ \Id_{E\otimes L^d_x}
	\right)
	\right\|	
	\leq C d^{-1}.
	\eeqe
\end{theorem}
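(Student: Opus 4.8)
The statement is the quantitative near-diagonal expansion of the Bergman kernel, and the plan is to follow the analytic localization technique as presented in~\cite{ma2007holomorphic}. The first step is to realize the Bergman projector $P_d$ onto $H^0(M,E\otimes L^d)$ as the orthogonal projector onto the kernel of the Kodaira Laplacian $\Box_d$ acting on $(0,\bullet)$-forms with values in $E\otimes L^d$. The crucial input is a \emph{spectral gap}: combining the Bochner--Kodaira--Nakano formula with the positivity of the curvature $\omega$ of $L$, one shows that for $d$ large the nonzero spectrum of $\Box_d$ lies in $[cd,+\infty)$ for some $c>0$ independent of $d$. This gap is exactly what separates the holomorphic sections from the rest of the spectrum at scale $d$, and it is the source of the factor $\sqrt d$ in the rescaling.

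Next I would \emph{localize} the kernel. Writing $D_d$ for the Dolbeault--Dirac operator $\sqrt2(\bar\partial+\bar\partial^*)$ on $E\otimes L^d$-valued $(0,\bullet)$-forms, one uses the finite propagation speed of the wave operator $\cos(tD_d)$: for a smooth even function whose Fourier transform has small support, the operator it defines via functional calculus has a Schwartz kernel supported within distance $O(1/\sqrt d)$ of the diagonal. The spectral gap then guarantees that this operator differs from $P_d$ by a term which is $O(d^{-\infty})$ in every $\mathscr C^m$-norm. Hence the problem becomes entirely local: it suffices to analyze the kernel inside a single geodesic ball $B_{T_xM}(0,2R)$, and the whole subsequent analysis has to be carried out uniformly in the base point $x$.

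The heart of the argument is the \emph{rescaling}. Fixing $x$, trivializing $E\otimes L$ by parallel transport and using normal coordinates, I substitute $z\mapsto z/\sqrt d$ and conjugate $\Box_d$ by this dilation together with the natural Gaussian weight. The resulting operators $\mathcal L_d$ admit a Taylor expansion $\mathcal L_d=\mathcal L_0+\sum_{i\geq1}d^{-i/2}\mathcal O_i$, where $\mathcal L_0$ is the model harmonic oscillator on $\C^n$ associated with the constant curvature form $\omega_0$; the projector onto $\ker\mathcal L_0$ is precisely multiplication by the Bargmann--Fock kernel $\mathcal P$ of~(\ref{mp}), tensored with $\Id_{E\otimes L^d_x}$. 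Writing $P_d=\frac{1}{2i\pi}\oint(\lambda-\mathcal L_d)^{-1}\,\mathrm d\lambda$ for a contour enclosing only $0$ (allowed by the spectral gap) and expanding the resolvent against the Taylor expansion of $\mathcal L_d$ gives a full asymptotic expansion $d^{-n}K_d(z/\sqrt d,w/\sqrt d)=\sum_{r\geq0}d^{-r/2}\mathcal P_r(z,w)$ with leading term $\mathcal P_0=\mathcal P\,\Id$. Uniform control of the derivatives $D^k$ for $k\leq m$ and of the remainder then follows from elliptic estimates for $\mathcal L_d$ in weighted Sobolev spaces combined with Sobolev embedding, the uniformity in $x$ being furnished by the compactness of $M$.

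I expect two points to be the genuine obstacles. First, the estimate claims an error of order $d^{-1}$ rather than $d^{-1/2}$, which forces one to prove that the half-order coefficient $\mathcal P_1$ vanishes identically. In our setting the metric is Kähler, $g=\omega(\cdot,J\cdot)$, and this is exactly the structural hypothesis that makes the first-order operator $\mathcal O_1$ contribute nothing to the expansion of the projector, leaving $d^{-1}$ as the true size of the first correction; establishing this cleanly is the most delicate computation. Second, the entire scheme must be run with constants \emph{uniform} in $x$ and simultaneously in all derivatives up to order $m$, and it is precisely the weighted-Sobolev formalism together with the compactness of $M$ that makes this uniformity possible.
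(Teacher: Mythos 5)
The paper does not prove this statement at all: it is quoted directly from Ma--Marinescu \cite[Theorem 4.2.1]{ma2007holomorphic}, in the simplified form of \cite[Proposition 3.4]{letpuch}. Your sketch is a correct outline of precisely the proof given in that reference --- spectral gap via Bochner--Kodaira--Nakano, localization by finite propagation speed, rescaling to the harmonic-oscillator model whose kernel is the Bargmann--Fock kernel $\mathcal P$, vanishing of the $d^{-1/2}$ coefficient in the K\"ahler case, and uniformity in $x$ and in derivatives up to order $m$ via weighted Sobolev estimates and compactness --- so it is essentially the same approach as the cited source.
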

We used in fact~\cite[Proposition 3.4]{letpuch} which simplifies the original theorem, which is more precise but takes in account the derivatives of the volume form.

In the sequel, $f$ denotes the trivialization of a section $s$ at scale $1/\sqrt d$, that is
\begin{equation}\label{fd}
f_d := \frac{1}{\sqrt d^n}f\left(\frac{\cdot}{\sqrt d}\right).
\end{equation}

\begin{lemma}\label{orangoutan} Under the hypotheses of Theorem~\ref{theorem1}, let $s\in \mathscr{C}^\infty(M,E\otimes L^d)$.  Let $x\in M$ and $f=(f_i)_{i\in \{1, \cdots, 2r\}}$ be the trivialization of $s$ in the setting of Section~\ref{bergman}. 
	Then,
	\begin{equation*}\label{Pamela}
	 \nabla f(x)= \nabla_0 f(x)= Df(x) \text{ and }\nabla^2 f_d(x) = \frac{1}{\sqrt d^n}
	\left(\nabla_0^2 + \frac{1}dR_0^E \right) f,
		\end{equation*}
where $\nabla_0$ is defined by~(\ref{nabla}).
\end{lemma}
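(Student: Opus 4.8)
The plan is to work entirely in the trivialization $\varphi_x$ coming from parallel transport along the geodesics issued from $x$, together with the geodesic normal chart centered at $x$. The whole statement is an exact pointwise identity at $x$, so no use of the asymptotics of Theorem~\ref{Dai} is needed. The two decisive features of this gauge are that the connection one-form $A$ of the Chern connection $\nabla$ on $E\otimes L^d$ vanishes at the center, $A(x)=0$, and that the Christoffel symbols of the Levi--Civita connection vanish at $x$ as well. The Bargmann--Fock connection $\nabla_0$ defined in \eqref{nabla} is itself in radial (Fock--Schwinger) gauge: contracting its connection form $\frac12\pi(\bar\partial-\partial)\|z\|^2$ with the radial field $\sum_i(z_i\partial_{z_i}+\bar z_i\partial_{\bar z_i})$ gives zero, and the form vanishes at $0$.

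First I would settle the first-order identity. Since $A(x)=0$, writing $\nabla=D+A$ in the trivialization gives $\nabla f(x)=Df(x)$ immediately; the same holds for $\nabla_0$ because its connection form vanishes at the origin. This yields $\nabla f(x)=\nabla_0 f(x)=Df(x)$. For the second-order identity I would use \eqref{covariant}. Because we test against constant coordinate vector fields and the Christoffels vanish at $x$, the correction term $\nabla_{\nabla^{\mathrm{LC}}_V W}$ drops at $x$, so $\nabla^2_{VW}f(x)=\nabla_V\nabla_W f(x)$. Expanding $\nabla=D+A$ and using $A(x)=0$ leaves $\nabla^2_{VW}f(x)=D^2f(V,W)+(\partial_V A_W)(x)\,f(x)$.

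The key local computation is that the radial gauge forces the symmetric part of $\partial A$ to vanish at $x$: differentiating the gauge condition $\sum_k z^k A_k\equiv 0$ and using $A(x)=0$ shows that the quadratic form $\sum_{k,l}(\partial_l A_k)(x)z^kz^l$ vanishes identically, so $(\partial_V A_W)(x)=\frac12 R^{E\otimes L^d}(V,W)$ is purely the curvature of $E\otimes L^d$ at $x$. Thus $\nabla^2_{VW}f(x)=D^2f(V,W)+\frac12 R^{E\otimes L^d}(V,W)f(x)$, exactly the shape of the Bargmann--Fock computation of $\nabla_0^2=D^2+\frac12\mathcal R_0$ recalled earlier in this section. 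It then remains to rescale and identify the two curvature contributions. Writing $R^{E\otimes L^d}=R^E+d\,R^L$ and passing to the chart at scale $1/\sqrt d$ of \eqref{fd} (a curvature two-form and the ordinary Hessian each pick up a factor $1/d$), the dominant piece $d\,R^L$ becomes order one; by \eqref{omega} the Chern curvature of $L$ is $\omega$, and in the normal chart $\omega(x)$ is the standard form $\omega_0$, so this piece is exactly $\mathcal R_0$. Combining it with the rescaled Hessian reconstructs $\nabla_0^2$ on the rescaled field, while the subdominant $R^E$ survives only with a factor $1/d$ and is precisely $\frac1d R_0^E$. Collecting the normalization $d^{-n/2}$ gives $\nabla^2 f_d(x)=d^{-n/2}\bigl(\nabla_0^2+\frac1d R_0^E\bigr)f$.

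The main obstacle is the careful bookkeeping of the rescaling: one must check that a single factor $1/\sqrt d$ per derivative and the factor $1/d$ carried by curvature two-forms combine so that $d\,R^L$ lands at order one while $R^E$ is demoted to order $1/d$, and simultaneously that the ordinary Hessian $D^2$ is correctly matched to that of the rescaled field (here lies the only genuine subtlety, since the raw and rescaled Hessians differ by a factor $d$). The other points requiring care, though standard, are the radial-gauge identity $(\partial_V A_W)(x)=\frac12 R^{E\otimes L^d}(V,W)$ and the exact identification $R^L(x)=\mathcal R_0$ at the center, both of which rest on the normalization $g=\omega(\cdot,J\cdot)$ and on the choice of geodesic coordinates.
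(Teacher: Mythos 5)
Your proof is correct and follows essentially the same route as the paper: both work in the parallel-transport (radial) trivialization, reduce to the pointwise identity $\nabla_v\nabla_w f(x)=D^2_{v,w}f(x)+\tfrac12\bigl(R^E_x+d\,R^L_x\bigr)(v,w)f(x)$, identify $R^L_x$ with $\mathcal R_0$ in normal coordinates, and then rescale at scale $1/\sqrt d$. The only difference is that where the paper quotes the expansion of the connection one-form from \cite[Lemma 1.2.4]{ma2007holomorphic}, you re-derive the needed special case on the spot via the Fock--Schwinger gauge identity $\sum_k z^k A_k\equiv 0$ (giving $(\partial_V A_W)(x)=\tfrac12 R^{E\otimes L^d}_x(V,W)$), together with an explicit check that the Levi--Civita correction in \eqref{covariant} vanishes at the center of normal coordinates --- a correct, self-contained substitute for the citation.
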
	
\begin{proof}
		First, recall that~\cite[Lemma 1.2.4]{ma2007holomorphic}
	for any smooth function $f: B(x,\epsilon)\to \R^r$ and any vector $v\in \R^{2n}$,
	\begin{equation}\label{mite}
	\forall y \in B(x,\epsilon),\  \nabla_v f(y)=
	Df(y)(v)+
	\frac{1}2 
	\left((R^E_x+dR^L_x)(y,v) +dO(|y-x|^2|v|)\right)f(y),
	\end{equation}
	where $R^E_x$ (resp. $R^L_x$) denotes the curvature of $(E,h^E)$ (resp.  $(L,h^L)$) at $x$.
	Consequently, for any $v,w\in \R^{2n}$,
	\begin{multline*}
		\nabla_v \nabla_w f(y) =
		\nabla_v \left(
		Df(y)(w) +\frac12(R_0^E(y,w) +dR_0^L(y,w)
		+dO(|y-x|^2))f(y)\right) \\
		 =
		D^2_{v,w}f(y) +
		\frac12((R_x^E(v,w) +dR_x^L(v,w)+dO(|y-x|)
		)f(y)
		+ \frac12(R_x^E(y,w) +dR_x^L(y,w))+O(d|y-x|^2))df(y)(v) \\
		+
		\frac{1}2 \left(
		(R_0^E+dR_x^L)(y,v) +dO(|y-x|^2)\right)
		\left(Df(y)(w) +\frac12(R_x^E(y,w) +dR_x^L(y,w)
		+dO(|y-x|^2))f(y)\right),
	\end{multline*}
	so that at $y=x$, using again~(\ref{mite}),
	\begin{eqnarray*}
		\nabla_v \nabla_w f(0) &=&
		D^2_{v,w}f(0) +
		\frac12(R_x^E(v,w) +dR_x^L(v,w)
		)f(0)).
	\end{eqnarray*}
	Now, from~(\ref{omega}) and since we are in normal coordinates, 
	$$R_x^L = \mathcal R^0$$  and 
	$\nabla_0$ defined by~(\ref{nabla}) satisfies
	$ \nabla_0 = D +\mathcal R_0,$
	we obtain the result.
	\end{proof}
\begin{proposition}~\cite{bettigayet}\label{pasteque}
Under the hypotheses of Theorem~\ref{theorem1}, let $x\in M$. Under the trivializations above,
	in any orthonormal basis of $T_xM$, 
	\beqe \cov\left(f_d, \nabla f_d, \nabla^2f_d\right)_{|x} = \cov_{\mathrm{BF}}+ O(\frac{1}d),
	\eeqe
	where $f_d$ is defined by(\ref{fd}) and 
	$\cov_{\mathrm{BF}}$ is defined by~(\ref{covBF}).
\end{proposition}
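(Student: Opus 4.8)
The plan is to read the covariance of the $2$-jet $(f_d,\nabla f_d,\nabla^2 f_d)$ directly off the Bergman kernel and to compare it block by block with its Bargmann--Fock model. Since $s$ follows the Gaussian law $\mu_d$ and the trivialization together with the rescaling~\eqref{fd} are linear, the field $f_d$ is again a centered complex Gaussian field, and the vector $(f_d(x),\nabla f_d(x),\nabla^2 f_d(x))$ is a centered complex Gaussian vector. Its covariance is therefore determined by the two-point function alone: by the standard rule that the covariance of (covariant) derivatives of a Gaussian field is obtained by differentiating its covariance in both variables, one has for $j,k\in\{0,1,2\}$
\begin{equation*}
\cov\!\big(\nabla^{j} f_d(x),\nabla^{k} f_d(x)\big)=\nabla_z^{\,j}\,(\nabla_w^*)^{k}\Big[\tfrac{1}{d^{n}}K_d\big(\tfrac{z}{\sqrt d},\tfrac{w}{\sqrt d}\big)\Big]\Big|_{z=w=0},
\end{equation*}
the holomorphic derivatives acting on the $z$-variable and the antiholomorphic ones on the $w$-variable of the trivialized, rescaled kernel, which by the discussion of Section~\ref{bergman} is exactly $\cov(f_d(z),f_d(w))$. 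Thus the matrix on the left-hand side of the Proposition is a fixed continuous expression in the derivatives, up to order $2$ in each variable, of $\tfrac{1}{d^{n}}K_d(\cdot/\sqrt d,\cdot/\sqrt d)$.

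I would then apply Theorem~\ref{Dai} with $m=4$: the rescaled Bergman kernel converges to the Bargmann--Fock kernel $\mathcal P\,\id$ in $\mathscr C^{4}$ with error $O(d^{-1})$, so every derivative of total order at most $4$ --- in particular all those appearing above --- converges to the corresponding derivative of $\mathcal P(z,w)\,\id$ at $(0,0)$ with the same error. The only point requiring care is that Theorem~\ref{Dai} controls the \emph{ordinary} derivatives $D^{k}$, whereas the statement is phrased through the \emph{covariant} derivatives $\nabla,\nabla^2$. This is precisely what Lemma~\ref{orangoutan} supplies: at $x$ one has $\nabla f_d(x)=D f_d(x)$ and $\nabla^2 f_d(x)=\tfrac{1}{\sqrt d^{\,n}}\big(\nabla_0^2+\tfrac1d R_0^E\big)f$, so that passing from the Bargmann--Fock connection $\nabla_0$ to $\nabla$ only inserts a curvature correction of size $O(d^{-1})$. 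Consequently, up to $O(d^{-1})$, the covariance of $(f_d,\nabla f_d,\nabla^2 f_d)$ at $x$ coincides with the covariance of the Bargmann--Fock jet $(f,\nabla_0 f,\nabla_0^2 f)$ computed from $\mathcal P$.

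Finally, Proposition~\ref{coaff} identifies this last covariance as exactly $\cov_{\mathrm{BF}}$ of~\eqref{covBF}, which yields the claim. I expect the genuine content to be carried entirely by the two cited results, so that the remaining work is bookkeeping: one must verify that the connection corrections furnished by Lemma~\ref{orangoutan} --- notably the term $\tfrac1d R_0^E$ inside $\nabla^2$, together with the quadratic-in-connection cross terms generated when $\nabla_z^2(\nabla_w^*)^2$ is expanded on the kernel --- indeed enter only at order $O(d^{-1})$, and that differentiating the off-diagonal expansion twice in each variable stays within the range $k\le m$ of Theorem~\ref{Dai}. Both are immediate once $m$ is fixed large enough. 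The block-diagonal shape of $\cov_{\mathrm{BF}}$, and in particular that $\nabla_0^2 f(0)\in\sym_\C(\C^n,\C^r)$ almost surely (Remark~\ref{remarkalmostsure}), is then inherited from the corresponding structure of the $\mathcal P$-jet.
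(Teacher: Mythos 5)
Your proposal is correct and takes essentially the same route as the paper, whose entire proof is the one-line combination of Proposition~\ref{coaff}, Theorem~\ref{Dai} and Lemma~\ref{orangoutan} that you assemble. Your write-up merely makes explicit the bookkeeping the paper leaves implicit: reading the jet covariance off the rescaled Bergman kernel by differentiating twice in each variable, invoking Theorem~\ref{Dai} with $m=4$, and checking that the passage from $D$ and $\nabla_0$ to $\nabla$ only contributes $O(\frac{1}{d})$ curvature corrections.
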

\begin{proof}This is a direct consequence 
	of Proposition~\ref{coaff}, Theorem~\ref{Dai} and~(\ref{Pamela}).
\end{proof}

\section{Proof of the main results}
In this section, we prove  Theorem \ref{theorem1} and Corollaires \ref{coro2} and \ref{almost}.
Let us start by computing the volume of a submanifold $Z(s)$, where $s\in H^0(X,E\otimes L^d)$.
	\begin{proposition}\label{wirtinger}
	Under the hypotheses of Theorem~\ref{theorem1}, for any $d$ large enough, there exists a positive constant $\vol_d$, such that for any smooth transverse section $s\in H^0(X,E\otimes L^d)$,
	 one has the equality $ \vol(Z(s))= \vol_d.$
	Moreover, $$ \vol_d
=
	\frac{n!}{(n-r)!}d^r \vol(M) +O(d^{r-1}).
	$$		
\end{proposition}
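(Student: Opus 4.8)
The plan is to compute the volume of $Z(s)$ using Wirtinger's theorem, which expresses the volume of a complex submanifold as an integral of the ambient K\"ahler form. Since $Z(s)$ is a complex submanifold of complex dimension $n-r$, Wirtinger's theorem gives
\begin{equation*}
\vol(Z(s)) = \int_{Z(s)} \frac{\omega^{n-r}}{(n-r)!},
\end{equation*}
where $\omega$ is the curvature form of $(L,h)$ and $g=\omega(\cdot, J\cdot)$ is the associated K\"ahler metric. The crucial point here is that the right-hand side is a cohomological quantity: it depends only on the homology class $[Z(s)]$, not on the particular section $s$. Since all transverse sections $s\in H^0(M,E\otimes L^d)$ have zero loci in the same homology class (they are all Poincar\'e dual to the top Chern class $c_r(E\otimes L^d)$), the volume is the same constant $\vol_d$ for every such $s$. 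This establishes the first assertion.

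To compute $\vol_d$ explicitly, I would express the integral as a pairing of cohomology classes:
\begin{equation*}
\vol_d = \int_{Z(s)} \frac{\omega^{n-r}}{(n-r)!} = \frac{1}{(n-r)!}\int_M \omega^{n-r}\wedge c_r(E\otimes L^d),
\end{equation*}
using that $[Z(s)]$ is Poincar\'e dual to $c_r(E\otimes L^d)$. The next step is the asymptotic analysis of $c_r(E\otimes L^d)$ as $d\to\infty$. The total Chern class of $E\otimes L^d$ is a polynomial in $d$ whose top-degree behaviour is governed by $c_1(L)=[\omega]$: writing the Chern roots of $E$ as $x_1,\dots,x_r$, one has $c_r(E\otimes L^d)=\prod_{i=1}^r(x_i + d\,[\omega])$, so the leading term in $d$ is $d^r[\omega]^r$, with lower-order corrections of order $d^{r-1}$ involving $c_1(E)$ and powers of $[\omega]$.

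Substituting this expansion, the leading term becomes
\begin{equation*}
\frac{d^r}{(n-r)!}\int_M \omega^{n-r}\wedge \omega^r = \frac{d^r}{(n-r)!}\int_M \omega^n = \frac{d^r}{(n-r)!}\, n!\,\vol(M),
\end{equation*}
where the last equality uses that $\int_M \omega^n/n! = \vol(M)$ since $g$ is the K\"ahler metric associated to $\omega$. This gives exactly the stated leading coefficient $\frac{n!}{(n-r)!}d^r\vol(M)$, and the subleading Chern-class contributions yield the $O(d^{r-1})$ error term. I expect the only genuine subtlety to be bookkeeping the normalization constants correctly — in particular keeping track of the factor $\frac{1}{2i\pi}$ in the definition~\eqref{omega} of $\omega$ and verifying that $\int_M\omega^n/n!$ indeed equals $\vol(M)$ for the metric $g=\omega(\cdot,J\cdot)$, rather than any real mathematical obstacle, since everything reduces to standard Chern--Weil theory and the topological invariance of the homology class.
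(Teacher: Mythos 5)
Your proof is correct and follows essentially the same route as the paper: Wirtinger's theorem, Poincar\'e duality identifying $[Z(s)]$ with $c_r(E\otimes L^d)$ (which gives both the constancy of $\vol(Z(s))$ over transverse sections and the cohomological formula), and the expansion of $c_r(E\otimes L^d)$ in powers of $d$. Your Chern-root formula $c_r(E\otimes L^d)=\prod_{i=1}^r\left(x_i+d\,c_1(L)\right)$ is in fact a cleaner, correctly stated version of the expansion the paper quotes (whose displayed formula contains a typo), so no gap remains.
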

\begin{proof}
By Wirtinger theorem \cite{Wirtinger}, we have that the volume of $Z(s)$ coincides with its symplectic volume, that is,
$$\vol(Z(s))= \int_{Z(s)}\frac{\omega^{n-r}}{(n-r)!}.$$
	Remark that the dual of the fundamental  class of $Z(s)$ is the Euler class $[c_r(E\otimes L^d)]\in H^{r,r}(X,\C)$ of $E\otimes L^d$, so that 
	$$
	\int_{Z(s)}\frac{\omega^{n-r}}{(n-r)!}= \int_M c_r(E\otimes L^d)\wedge \frac{\omega^{n-r}}{(n-r)!}.
	$$
	Finally, using that $$c_r(E\otimes L^d)= \sum_{i=0}^r
	c_i(E)\wedge (c_1(L))^{d-i} $$
	and $c_0(E)=1$ (see ~\cite[p. 55]{fulton}) we obtain the asymptotics
	$$\int_M c_r(E\otimes L^d)\wedge \frac{\omega^{n-r}}{(n-r)!}\sim_{d\to\infty}
	\frac{n!}{(n-r)!}d^r \vol(M).
	$$ and hence the result.
\end{proof}
We prove now the main theorem. 
\begin{proof}[Proof of Theorem~\ref{theorem1}]
Since the proofs for the four curvatures are similar, we will provide a detailed proof only for the case of holomorphic bisectional curvature, as it is the most intricate.

Let $(a_d)_d$ be a sequence of positive real numbers. Without loss of generality, we can assume that for every sufficiently large $d$, we have $a_d<cd $, where $c>0$ is a constant that depends only on $X,L$ and $E$ (otherwise, the estimate in the statement of the theorem is vacuous).

We will estimate the expected value of the complement of the event we are interested in, that is, $\mathbb E[
	\density(\hbc_{Z(s),g_{|Z(s)}}>-a_d)
	]$.	By the Kac--Rice formula,
	see for instance~\cite[Theorem 1.3]{letendre2016expected}, 
\begin{equation}\label{KR} \mathbb E 
	[
	\vol(\hbc_{Z(s),g_{|Z(s)}}>-a_d)
	]=
	\int_M \mathbb E\left[{\bf 1}_{\mathrm{hb}(a_d)}(x,s)|\det  \nabla s G \nabla s^*)|^{\frac12} \, \big\vert \, s(x)=0
	\right]\rho_{s(x)}(0)\mathrm{d}x,
	\end{equation}
	where $\rho_{s(x)}(0)$ is
	the density of $s(x)$ at $0$
	and for any $a\in \R$,
	\begin{eqnarray*}
		\mathrm{hb}(a_d)&:=& \left\lbrace (x,s)\in M\times H^0(M,E\otimes L^d), \ x\in Z(s), \ \sup_{X,Y\in  T_xZ(s)} \hbc_{Z(s)} (X,Y)>-a_d\right\rbrace.
	\end{eqnarray*}
Let $x\in M$ and let us compute the expectation in the latter integral. For this, we use the trivializations described in Section~\ref{bergman}. 
Let $$ f_d := \frac{1}{\sqrt d^n}f\left(\frac{\cdot}{\sqrt d}\right).$$
Let us define the random Gaussian variables 
$$ (F,S,{T}):= \left(f_d(x),\nabla f_d(x), \nabla^2 f_d(x)\right)\in \C^{r}\times 
L(\R^{2n},\R^{2r})\times L((\R^{2n})^{\otimes 2},\R^{2r} ). $$
By Proposition~\ref{proposition2},
for any pair of unit vectors $X,Y$ of $\ker S = T_xZ(s),$ 
\begin{equation}\label{xy}
\hbc_{Z(s)} (X,Y)=  \hbc_{M} (X,Y)
-2 d
(SS^*)^{-1}(T^2(X,Y))(T^2(X,Y)).
\end{equation}
Let $$ \mathcal{L}(n,r):=L(\R^{2n},\R^{2r})\times L((\R^{2n})^{\otimes 2},\R^{2r})$$
and $\mathrm{HB}(a_d)\subset \mathcal{L}(n,r)$ be defined by
\begin{eqnarray*}
	\mathrm{HB}(a_d)&:=& \left\lbrace (S,T)\in \mathcal{L}(n,r), \ 
	\displaystyle\sup_{\substack{(X,Y)\in (\ker S)^2 \\ \|X\|=\|Y\|=1}}\hbc_{M} (X,Y)
	-2 d
	(SS^*)^{-1}(T^2(X,Y))(T^2(X,Y))>-a_d \right\rbrace.
\end{eqnarray*}
Hence, it is straightforward to check that 
\begin{equation}\label{reno}
\mathbb E\left[{\bf 1}_{\mathrm{hb}(a_d)}(x,s)|\det  \nabla s G \nabla s^*)|^{\frac12} \, \big\vert \, s(x)=0
\right]\rho_{s(x)}(0) =
d^{r}\mathbb E\left[{\bf 1}_{\mathrm{HB}(a_d)}(S,T)|\det  SS^*|^{\frac12} \, \big\vert \, F=0
\right]\rho_{F}(0),
\end{equation}
where $\rho_F(0)$ denotes the density of $F$ for the measure induced by $\mu_d$. 
Let us define
$$ h = \max_M \|\hbc_{M}\|$$
and 
\begin{eqnarray*}
	\mathrm{\widetilde{HB}}(a_d)&:=& \left\lbrace (S,T)\in \mathcal{L}(n,r), \ 
	\displaystyle\min_{\substack{(X,Y)\in (\ker S)^2 \\ \|X\|=\|Y\|=1}}\|T(X,Y)\|^2< \frac{a_d+h}{2d}\|S\|^{2} \right\rbrace.
\end{eqnarray*}
Then, Lemma~\ref{lemmaT} and Equation~(\ref{xy}) imply that
\begin{equation}\label{aba}
 \mathrm{HB}(a_d)\subset \mathrm{\widetilde{HB}}(a_d).
\end{equation}
By Proposition~\ref{pasteque},
\begin{equation}\label{coco}\cov (F,S,T)= \cov_{\mathrm{BF}}+O(\frac{1}d),
\end{equation}
so that
$$ \cov ((S,T)|F=0)= \cov_{\mathrm{BF}}(S,T)+O(\frac{1}d).$$
Note also that $$
\rho_{F}(0)\to_{d\to \infty}\rho^{\mathrm{BF}}_{F}(0).$$
Since the covariance matrix of $((S,T)|F=0)$ is positive, for $d$ large enough, using~(\ref{aba}), 
we obtain

\begin{multline}\label{mino}
	\mathbb E\left[{\bf 1}_{(\mathrm{ {HB}}(a_d))}|\det  SS^*|^{\frac12} \, \big\vert \, F=0
	\right]
	 \leq  \\
	\frac{1}{C(n,r)}\int_{(S,T)\in L(\C^{n},\C^{r})\times \sym_\C(n,r)}
				{\bf 1}_{(\mathrm{\widetilde{HB}}(a_d))}(S,T)
	\det  (SS^*) e^{-\frac{1}{4\pi^2 }
 \|S\|^2-\frac{1}{8\pi^4 } \|T\|^2}\mathrm{d}S \mathrm{d}T,
\end{multline}
 where $$C(n,r):=
(2\pi)^{nr+\dim_\C (\sym_\C(n,r))}\pi^{nr+2\dim_\C (\sym_\C(n,r))}2^{nr},$$
and $ \|T\|^2 = \tr (TT^*),$ see~(\ref{meumeu}).
Note that in \eqref{mino} we used that for a linear complex $S$, 
if $S_\R$ denotes its associated linear real operator, then
$$| \det  S_\R S_\R^*|^{\frac{1}2} = \det S S^*,$$
and that the support of the latter Gaussian measure
is included in $L(\C^{n},\C^{r})\times \sym_\C(n,r)$, that is, almost surely, $S$ is a complex linear map and $T$ is a complex bilinear map, see Remark \ref{remarkalmostsure}. 

By Fubini and 
since the measure of $T$ is invariant under the symmetries of $\C^{n}$, in the right hand side of~(\ref{mino}),
one can assume that 
$$ \ker S = \C^{n-r}\times \{0\}\subset \C^n$$
in $\mathrm{\widetilde {HB}}(a_d)$. 
Moreover 
by Lemma~\ref{comparaison} and Proposition~\ref{dist}, 
$$\min_{(X,Y)\in (\mathbb S^{2n-2r-1})^2}\|T(X,Y)\|^2
= (n-r)^2\dist^2(\Phi_1(T), \Delta_1),
$$  
where $\Phi_1$ and $\Delta_1$ are defined at the beginning of Section~\ref{sysy}.

Hence, by Lemma~\ref{codim} and Proposition~\ref{burg}, there exist $C,C',C''>0$ (that are independent of the sequence $(a_d)_d$) such that for $d$ large enough,
\begin{multline}\label{tubeestimate}
 \mathbb E\left[{\bf 1}_{(\mathrm{{HB}}(a_d))}|\det  SS^*|^{\frac12} \, \big\vert \, F=0
 \right]
 \\\leq   C
\int_{\substack{(S,T)\in L(\C^n,\C^r)\times \sym_\C(n-r,r)\\
\dist^2(\Phi_1(T),\Delta_1)< C\frac{a_d+h}{2d}\|S\|^2}}
\det  (SS^*) e^{-\frac{1}{4\pi^2 } \|S\|^2-\frac{1}{8\pi^4 } \|T\|^2}\mathrm{d} S\mathrm{d} T
\\ \leq
C'
\int_{S\in L(\C^{n},\C^r)} \det  (SS^*)
 \int_{\rho>0} \rho^{2N(n-r,r)-1}  e^{-\rho^2} \left(\frac{(a_d+h)\|S\|^2}{\rho^2 d}\right)^{3r-2n+3}e^{-\frac{1}{4\pi^2}\|S\|^2}\mathrm{d} S\\
 \leq   C''(\frac{a_d+h}{d})^{3r-2n+2},
\end{multline}
where we identified $T$ and $\Phi_1(T)$ through the isometry $\Phi$ defined in the proof of Lemma~\ref{lembi}.

Thus, by \eqref{KR}, \eqref{reno} and \eqref{tubeestimate}, there exists a constant $C$ such that 

$$\mathbb E 
[
\vol(\hbc_{Z(s)}>-a_d)
]\leq Cd^r\left(\frac{a_d+C}{d}\right)^{3r-2n+3}.$$

Now, since by Lemma~\ref{wirtinger}
$\vol(Z(s))$ is a deterministic quantity $\vol_d$ of order $O(d^r)$. 
In particular, for any $d$ large enough and recalling~(\ref{KR}) and~(\ref{reno}), we obtain
$$ \mathbb E 
[
\density(\hbc_{Z(s)}>-a_d)
]\leq C\left(\frac{a_d+C}{d}\right)^{3r-2n+3},
$$
which is equivalent to $$ \mathbb E 
[
\density(\hbc_{Z(s)}<-a_d)
]\geq 1-C\left(\frac{a_d+C}{d}\right)^{3r-2n+3}.
$$
Hence the result for the holomorphic bisectional curvature.

Similarly,  following the same proof and using Lemma~\ref{codim} for the codimensions of $\Delta_2$ and $\Delta_3$, we obtain
\begin{enumerate}
	\item $ \displaystyle\mathbb E 
[
\vol(\hc >-a_d)
]
\leq C d^r\left(\frac{a_d+C}{d}\right)^{2r-n+1}$
\item $\displaystyle\mathbb E 
[
\vol(\ricci >-a_d)
]
\leq Cr^r
 \left(\frac{a_d+C}{d}\right)^{r(n-r)-(n-r-1)}$,
\end{enumerate}
 which implies the result for the holomorphic sectional curvature and for the Ricci curvature.
 
Finally, for the scalar curvature, one can directly compute in an analogous way:
\begin{eqnarray*}
	\mathbb E 
	[
	\vol(\scal_{Z(s)} >-a_d)
	]
	& \leq & C
	d^r \vol M 
	\int_{
		\|S\|^{-2}\|T\|^2< \frac{a_d+h}{2d}}\det  (SS^*) e^{-\frac{1}{4\pi^2 } \|S\|^2-\frac{1}{4\pi^4 } \|T\|^2}\mathrm{d}S \mathrm{d}T
	\\
	& \leq &
	Cd^r \int_{S} \det  (SS^*) \left(\frac{(a_d+h)\|S\|^2}{ d}\right)^{\frac{1}2r(n-r)(n-r+1)}e^{-\frac{1}{4\pi^2}\|S\|^2}\mathrm{d}S\\
	& \leq & C'd^r (\frac{a_d+C}{d})^{\frac{1}2r(n-r)(n-r+1)}.
\end{eqnarray*}
which implies the result.
\end{proof}
\begin{proof}[Proof of Corollary~\ref{coro2}]
Let us prove the corollary for the holomorphic bisectional curvature; the proof for the other curvatures is the same. By Theorem~\ref{theorem1},
	$$\forall d\gg1, \ 
	\mathbb E_{\mu_d}	[
	\density(\hbc_{Z(s)}<-a_d)
	]
	\geq 1-  C\left(\frac{a_d+C}{d}\right)^{3r-2n+2}.$$
	Since for any $\alpha>0$, 
	$$	\mathbb E_{\mu_d}	[
	\density(\hbc_{Z(s)}<-a_d)
	]\leq \alpha \mu_d [\density(\hbc_{Z(s)}<-a_d)<\alpha]
	+\mu_d [\density(\hbc_{Z(s)}<-a_d)\geq \alpha],$$
	we obtain
	$$
	\mu_d [\density(\hbc_{Z(s)}<-a_d)\geq \alpha]\geq 1-
	\frac{1}{\alpha}	C\left(\frac{a_d+C}{d}\right)^{3r-2n+2} .$$
	We conclude choosing $a_d d= d^{\epsilon}$ and
	$\alpha= \alpha_d =1- d^{-\eta}.$
\end{proof}
\begin{proof}[ Proof of Corollary~\ref{almost}]
	All the assertions are proven in the same way, so we write it only for the scalar curvature.
	For any $d$ large enough
	and any $s=(s_d)_d\in \mathcal H$, 
	let $$\forall d\geq 1, \ Y_d(s) := 1-\density (\scal_{Z(s_d)}<-1).$$
	Then, by Theorem~\ref{theorem1},
	$$0\leq  \int_{s\in \mathcal H}Y_d(s) d\mu(s) = O(d^{-\frac{1}2r(n-r)(n-r+1)}),$$
	so that 
	$$\int_{s\in  \mathcal H} \sum_{d=1}^\infty Y_d(s)d\mu(s)= \sum_{d=1}^\infty \int_{s\in \mathcal H}Y_d(s) d\mu(s)  < \infty,$$
	so that almost surely, $Y_d(s)\to_{d\to \infty} 0.$
\end{proof}
\begin{remark} In fact, one can prove a better estimate. For the scalar curvature, 
	let $$ 0<\epsilon <\frac{1}2 r(n-r)(n-r+1)-1.$$
	Then, 
	$$0\leq  \int_{s\in \mathcal H}d^{\epsilon}Y_d(s) d\mu(s) = O(d^{\epsilon -\frac{1}2r(n-r)(n-r+1)}),$$
	and the same argument as before shows that, almost surely,
	$$ \forall d\gg 1, \ 1-\density (\scal_{Z(s_d)}<-1) = o(d^{-\epsilon}).$$
\end{remark}

For the reader's convenience, Proposition~\ref{WKC} below provides a proof of the asymptotic version of Proposition~\ref{wirtinger} using the Kac--Rice formula.
\begin{proposition}\label{WKC}
	Under the hypotheses of Theorem~\ref{theorem1}, 
	$$ \mathbb E (\vol Z(s))
	\sim_{d\to \infty}
	\frac{n!}{(n-r)!}d^r \vol (M).$$
\end{proposition}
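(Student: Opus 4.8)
The plan is to apply the Kac--Rice formula to $s$ directly, dropping the indicator function that appears in~\eqref{KR}. First I would write
$$\mathbb E(\vol Z(s)) = \int_M \mathbb E\left[|\det \nabla s\, G\, \nabla s^*|^{\frac12} \,\big\vert\, s(x)=0\right]\rho_{s(x)}(0)\,\mathrm d x,$$
where $\mathrm d x$ is the Riemannian volume of $(M,g)$ and $\rho_{s(x)}$ is the density of $s(x)$. Next I would use the trivialization and rescaling of Section~\ref{bergman} exactly as in the derivation of~\eqref{reno}, which, without the indicator, reads
$$\mathbb E\left[|\det \nabla s\, G\, \nabla s^*|^{\frac12} \,\big\vert\, s(x)=0\right]\rho_{s(x)}(0) = d^{r}\,\mathbb E\left[\det SS^* \,\big\vert\, F=0\right]\rho_{F}(0),$$
where $(F,S)=(f_d(x),\nabla f_d(x))$ and I have used the identity $|\det S_\R S_\R^*|^{1/2}=\det SS^*$ for a complex linear $S$ (as in the proof of Theorem~\ref{theorem1}).

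The second step is to pass to the Bargmann--Fock limit. By Proposition~\ref{pasteque} the covariance of $(F,S)$ converges to the corresponding block of $\cov_{\mathrm{BF}}$ with an $O(1/d)$ error, uniformly in $x\in M$. In the limit $F$ and $S$ are independent centered complex Gaussians, with $S\colon\C^n\to\C^r$ having i.i.d. entries of variance $\pi$ and $F\in\C^r$ having covariance $\mathrm{Id}_{\C^r}$. Hence the conditioning on $F=0$ becomes vacuous, $\rho_F(0)\to\rho_F^{\mathrm{BF}}(0)=\pi^{-r}$, and it remains to evaluate $\mathbb E_{\mathrm{BF}}[\det SS^*]$.

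For this last computation I would apply the Cauchy--Binet formula $\det SS^*=\sum_{|A|=r}|\det S_{A}|^2$, where $A$ runs over the $r$-element subsets of columns and $S_A$ is the associated $r\times r$ minor. Since the entries of $S$ are independent with $\mathbb E|S_{ij}|^2=\pi$ and $\mathbb E[S_{ij}^2]=0$, only the diagonal permutation terms survive in the expansion of $\mathbb E|\det S_A|^2$, giving $\mathbb E|\det S_A|^2=\pi^{r}\,r!$; summing over the $\binom{n}{r}$ subsets yields $\mathbb E_{\mathrm{BF}}[\det SS^*]=\pi^{r}\frac{n!}{(n-r)!}$. Multiplying by $\rho_F^{\mathrm{BF}}(0)=\pi^{-r}$ cancels all powers of $\pi$ and leaves the constant $\frac{n!}{(n-r)!}$; integrating the limiting integrand over $M$ then produces $\frac{n!}{(n-r)!}d^r\vol(M)$, as claimed.

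The main obstacle is justifying the exchange of the limit $d\to\infty$ with the integral over $M$: I must promote the pointwise convergence of $\mathbb E[\det SS^*\mid F=0]\rho_F(0)$ to a convergence of the integral. This follows from the uniformity in $x$ of the $O(1/d)$ estimate in Proposition~\ref{pasteque} (itself inherited from the uniform bound of Theorem~\ref{Dai}), together with a uniform-integrability control of $\det SS^*$; the latter holds because $\det SS^*$ is a fixed polynomial in the Gaussian entries of $S$ whose covariance stays uniformly bounded for large $d$, so dominated convergence applies on the compact manifold $M$.
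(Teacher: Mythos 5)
Your proposal is correct, and its skeleton coincides with the paper's: Kac--Rice without the indicator, the rescaling identity of~\eqref{reno} producing the factor $d^r$, passage to the Bargmann--Fock limit where $F$ and $S$ decouple (so the conditioning on $F=0$ becomes vacuous), and evaluation of $\mathbb E_{\mathrm{BF}}[\det SS^*]\,\rho_F^{\mathrm{BF}}(0)$. The one genuine divergence is the final Gaussian computation: the paper identifies $SS^*$ as a complex Wishart matrix and invokes the expectation formula $\mathbb E[\det SS^*]=\frac{n!}{(n-r)!}\det\bigl(\frac{1}{n}\Sigma\bigr)$ from Nagar--Gupta, whereas you expand via Cauchy--Binet, $\det SS^*=\sum_{|A|=r}|\det S_A|^2$, and use independence together with circularity ($\mathbb E[S_{ij}^2]=0$) to kill all off-diagonal permutation pairings, obtaining $\binom{n}{r}\,r!\,\pi^r$ self-containedly; this is more elementary and removes the external citation. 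Note also that your normalizations ($\mathbb E|S_{ij}|^2=\pi$ and $\rho_F^{\mathrm{BF}}(0)=\pi^{-r}$) are the ones consistent with Proposition~\ref{coaff}, while the paper carries a factor $2\pi$ in both $\Sigma$ and $\rho_F^{\mathrm{BF}}(0)=(2\pi)^{-r}$; either convention cancels to the same constant $\frac{n!}{(n-r)!}$. Finally, you make explicit the uniform-in-$x$ $O(1/d)$ control from Proposition~\ref{pasteque} and the uniform integrability of $\det SS^*$ needed to exchange the limit $d\to\infty$ with the integral over the compact $M$ --- a point the paper dispatches with ``by the proof of Theorem~\ref{theorem1}'' --- so your write-up is, if anything, more complete on that step.
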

\begin{proof}
	By the proof of Theorem~\ref{theorem1}, forgetting the superfluous $\pi$ in the BF-measure, 
	$$
	\mathbb E (\vol Z(s))\sim_d d^r \vol (M)
	\mathbb E_{\mathrm{BF}}\left[\det  (SS^*)
	\right]\rho^{\mathrm{BF}}_{F}(0).
	$$ 
The random matrix $S S^*$ is a Wishart complex matrix and by~\cite[Theorem 3.1]{nagar2011expectations},
	$$ 
	\mathbb E_{BF}
	\left[\det  S S^*) 
	\right] = \frac{n!}{(n-r)!}\det (\frac{1}{n}\Sigma),
	$$
	where, writing $S_\C= (s_{ij})_{1\leq i\leq r, 1\leq j\leq n}\in M_{n,r}(\C)$, $$\Sigma := \mathbb E (S_\C S_\C^*)=
	\left(\sum_{k=1}^n
	\mathbb E  s_{ik}\overline{s_{jk}}\right)_{1\leq i,j\leq r}= \left(\sum_{k=1}^n
	\mathbb E  2\pi \delta_{i,j}\right)_{1\leq i,j\leq r}= 
	2\pi n I_r.$$
	Now, $\rho^{\mathrm{BF}}_F(0)= \frac{1}{(2\pi)^r},$ so that
	$$ \mathbb E_{\mathrm{BF}} (\vol Z(s))\sim_{d\to \infty}
	\frac{n!}{(n-r)!}d^r \vol (M),$$
	which is indeed the asymptotics of $\vol_d$ given by Lemma~\ref{wirtinger}.
\end{proof}

\bibliographystyle{amsplain}
\bibliography{courbure.bib}

\end{document}